\newcommand*\tasklabelformat[1]{#1)}
\numberwithin{equation}{section}
\newtheorem*{rep@theorem}{\rep@title}
\newcommand{\newreptheorem}[2]{%
\newenvironment{rep#1}[1]{%
 \def\rep@title{#2 \ref{##1}}%
 \begin{rep@theorem}}%
 {\end{rep@theorem}}}
\theoremstyle{theorem}
\newtheorem{thm}{Theorem}[section]
\newtheorem*{thm*}{Theorem}
\theoremstyle{definition}
\newtheorem{prop}[thm]{Proposition}
\newtheorem*{prop*}{Proposition}
\newtheorem{defn}[thm]{Definition}
\newtheorem{lem}[thm]{Lemma}
\newtheorem{cor}[thm]{Corollary}
\newtheorem*{cor*}{Corollary}
\theoremstyle{remark}
\newtheorem{rem}[thm]{Remark}
\title{\vspace*{-1.5cm} Bernstein-Markov measures and Toeplitz theory}
\author
{Siarhei Finski
}
\date{}
\newcommand{\imun} {\sqrt{-1}}
\newcommand{\comp}{\mathbb{C}}
\newcommand{\real}{\mathbb{R}}
\newcommand{\nat}{\mathbb{N}}
\newcommand{\enmr}[1]{\text{End}{(#1)}}
\newcommand{\ccal}{\mathscr{C}}
\DeclareFontFamily{OMX}{MnSymbolE}{}
\DeclareSymbolFont{MnLargeSymbols}{OMX}{MnSymbolE}{m}{n}
\DeclareFontShape{OMX}{MnSymbolE}{m}{n}{
    <-6>  MnSymbolE5
   <6-7>  MnSymbolE6
   <7-8>  MnSymbolE7
   <8-9>  MnSymbolE8
   <9-10> MnSymbolE9
  <10-12> MnSymbolE10
  <12->   MnSymbolE12
}{}
\DeclareFontShape{OMX}{MnSymbolE}{b}{n}{
    <-6>  MnSymbolE-Bold5
   <6-7>  MnSymbolE-Bold6
   <7-8>  MnSymbolE-Bold7
   <8-9>  MnSymbolE-Bold8
   <9-10> MnSymbolE-Bold9
  <10-12> MnSymbolE-Bold10
  <12->   MnSymbolE-Bold12
}{}
\let\llangle\@undefined
\let\rrangle\@undefined
\DeclareMathDelimiter{\llangle}{\mathopen}%
                     {MnLargeSymbols}{'164}{MnLargeSymbols}{'164}
\DeclareMathDelimiter{\rrangle}{\mathclose}%
                     {MnLargeSymbols}{'171}{MnLargeSymbols}{'171}
\newenvironment{sciabstract}{}
\begin{document}

\maketitle 

\vspace*{-0.7cm}

\vspace*{0.3cm}

\begin{sciabstract}
  \textbf{Abstract.} 
	We prove that Toeplitz operators associated with a Bernstein-Markov measure on a compact complex manifold endowed with a big line bundle form an algebra under composition.
	As an application, we derive a Szegő-type spectral equidistribution result for this class of operators.
	A key component of our approach is the off-diagonal asymptotic analysis of the Bergman kernel, also known as the Christoffel-Darboux kernel.
\end{sciabstract}

\pagestyle{fancy}
\lhead{}
\chead{Bernstein-Markov measures and Toeplitz theory}
\rhead{\thepage}
\cfoot{}


\newcommand{\Addresses}{{
  \bigskip
  \footnotesize
  \noindent \textsc{Siarhei Finski, CNRS-CMLS, École Polytechnique F-91128 Palaiseau Cedex, France.}\par\nopagebreak
  \noindent  \textit{E-mail }: \texttt{finski.siarhei@gmail.com}.
}} 

\vspace*{0.25cm}

\par\noindent\rule{1.25em}{0.4pt} \textbf{Table of contents} \hrulefill

\vspace*{-1.5cm}

\tableofcontents

\vspace*{-0.2cm}

\noindent \hrulefill


\section{Introduction}\label{sect_intro}
	The primary aim of this paper is to study Toeplitz operators associated with Bernstein-Markov measures on compact complex manifolds endowed with big line bundles.  
	As a byproduct of independent interest, we study the off-diagonal asymptotics of the corresponding Bergman kernel.
	\par 
	Throughout the whole article, we fix a compact complex manifold $X$, $\dim_{\comp} X = n$, and a big line bundle $L$ over $X$, i.e. such that for some $c > 0$, we have $\dim H^0(X, L^{\otimes k}) \geq c k^n$, for $k \in \nat$ big enough.  
	We fix a continuous Hermitian metric $h^L$ on $L$ and a positive Borel measure $\mu$ supported on a compact subset $K \subset X$.
	We assume that $\mu$ is \textit{non-pluripolar}, i.e. it does not charge pluripolar sets -- recall that a subset is pluripolar if it is contained in the $\{-\infty\}$-locus of some plurisubharmonic (\textit{psh}) function.
	\par 
	We denote by ${\textrm{Hilb}}_k(h^L, \mu)$ the positive semi-definite form on $H^0(X, L^{\otimes k})$ defined for arbitrary $s_1, s_2 \in H^0(X, L^{\otimes k})$ as follows
	\begin{equation}\label{eq_defn_l2}
			\langle s_1, s_2 \rangle_{{\textrm{Hilb}}_k(h^L, \mu)} = \int_X \langle s_1(x), s_2(x) \rangle_{(h^L)^{k}} \cdot d \mu(x).
	\end{equation}
	Remark that since $\mu$ is non-pluripolar, the above form is positive definite.
	\par
	Recall that $\mu$ is said to be \textit{Bernstein-Markov} with respect to $(K, h^L)$, if for each $\epsilon > 0$, there is $C > 0$, such that for any $k \in \nat$, we have
	\begin{equation}\label{eq_bm_clas}
		{\textrm{Ban}}_k^{\infty}(K, h^L)
		\leq
		C
		\cdot
		\exp(\epsilon k)
		\cdot
		{\textrm{Hilb}}_k(h^L, \mu),
	\end{equation}
	where ${\textrm{Ban}}_k^{\infty}(K, h^L)$ stands for the $L^{\infty}(K)$-norm on $H^0(X, L^{\otimes k})$ induced by $h^L$. 
	For a survey on the Bernstein-Markov property, we recommend \cite{BernsteinMarkovSurvey}.  
	Here, we only mention that the Lebesgue measure on a smoothly bounded domain in $X$, or on a totally real compact submanifold of real dimension $n$, is Bernstein-Markov, see \cite[Corollary 1.8]{BerBoucNys}, \cite[Proposition 3.6]{BernsteinMarkovSurvey} and \cite[Theorem 1.3]{MarinescuVu}, and also the recent survey \cite{MarinescuVuSurvey}.
	\par
	\begin{center}
		From now on we assume that $\mu$ is Bernstein-Markov with respect to $(K, h^L)$.
	\end{center}
	\par 
	Now, we fix a non-pluripolar subset $E \subset X$. 
	Following Siciak \cite{SiciakExtremal}, Guedj-Zeriahi \cite{GuedZeriGeomAnal}, we define the \textit{psh envelope} $h^L_{E}$ associated with $E$ as
	\begin{equation}\label{defn_env}
		h^L_E
		=
		\inf \Big\{
			h^L_0 \text{ with psh potential }: h^L_0 \geq h^L \text{ over } E
		\Big\}.
	\end{equation}
	When $L$ is ample, the non-pluripolarity of $E$ assures that the metric $h^L_E$ has a bounded potential, cf. \cite[Theorem 9.17]{GuedZeriGeomAnal}.
	When $L$ is merely big, the potential of $h^L_E$ might have singularities.
	In either case, the lower semi-continuous regularization $h^L_{E *}$ of $h^L_E$ is a metric with a psh potential, and it has the minimal singularities, cf. \cite[\S 1.2]{BermanBouckBalls} for the necessary definitions.
	\par 
	Recall that the equilibrium measure associated with $(K, h^L)$ is defined as
	\begin{equation}\label{eq_equil_meas}
		\mu_{\mathrm{eq}}(K, h^L) := \frac{1}{{\rm{vol}}(L)} c_1(L, h^L_{K *})^n.
	\end{equation}
	Above, ${\rm vol}(L)$ denotes the volume of the line bundle $L$, defined by
	\begin{equation}
		{\rm{vol}}(L) := \lim_{k \to \infty} \frac{n!}{k^n} \dim H^0(X, L^{\otimes k}),
	\end{equation}
	a limit that exists by Fujita’s theorem \cite{Fujita} and is strictly positive under our assumption that $L$ is big. 
	The positive $(1,1)$-current $c_1(L, h^L_{K *})$ is defined as $ c_1(L, h^L_{K *}) := c_1(L, h^L_0) + \frac{\imun}{2 \pi} \partial \bar{\partial} \phi_K$, where $h^L_{K *} = h^L_0 \cdot \exp(-\phi_K)$ and $h^L_0$ is an arbitrary smooth metric on $L$. 
	It is generally non-smooth by the above discussion.
	When $L$ is ample, the top wedge power $c_1(L, h^L_{K*})^n$ can be defined in the sense of Bedford-Taylor \cite{BedfordTaylor}, since $h^L_{K*}$ admits a bounded potential. 
	However, as we assume $L$ is merely big, the wedge product $c_1(L, h^L_{K*})^n$ appearing in (\ref{eq_equil_meas}) is interpreted via the non-pluripolar product developed by Boucksom-Eyssidieux-Guedj-Zeriahi \cite{BEGZ}, which generalizes the Bedford-Taylor construction. 
	In this case, $\mu_{\mathrm{eq}}(K, h^L)$ is a probability measure on $X$ supported in $K$, see \cite[Proposition 1.10]{BermanBouckBalls}.
	\par 
	From now on, we assume that $k$ is large enough so that $n_k > 0$, where $n_k := \dim H^0(X, L^{\otimes k})$.
	We define a sequence of measures on $X \times X$ as follows
	\begin{equation}\label{eq_mu_begmm}
		\mu_k^{\rm{Berg}} := \frac{1}{n_k} |B_k(x, y)|^{2}_{(h^L)^k} \cdot d \mu(x) \cdot d \mu(y),
	\end{equation}
	where $B_k(x, y) \in L^k_x \otimes (L^k_y)^*$ is the Bergman (or Christoffel-Darboux) kernel, defined as
	\begin{equation}\label{eq_bergm_kern}
		B_k(x, y) := \sum_{i = 1}^{n_k} s_i(x) \cdot s_i(y)^*,
	\end{equation}
	where $s_i$, $i = 1, \ldots, n_k$ is an orthonormal basis of $(H^0(X, L^{\otimes k}), {\textrm{Hilb}}_k(h^L, \mu))$. 
	The reader will check that (\ref{eq_bergm_kern}) doesn't depend on the choice of the basis, and that $\mu_k^{\rm{Berg}}$, $k \in \nat$, are probability measures on $X \times X$, cf. (\ref{eq_push_forw}).
	The following result is one of the main results of this paper.
	\par 
	\begin{thm}\label{thm_off_diag}
		The measures $\mu_k^{\rm{Berg}}$ converge weakly, as $k \to \infty$, to the measure $\Delta_* \mu_{eq}(K, h^L)$, where $\Delta: X \to X \times X$ is the diagonal embedding.
	\end{thm}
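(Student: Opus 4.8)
The plan is to reduce the statement to two facts about the family $\{\mu_k^{\rm{Berg}}\}$: that its two one-dimensional marginals converge weakly to $\mu_{eq}(K, h^L)$, and that any weak limit point is carried by the diagonal $\Delta(X) \subset X \times X$. Indeed, since $X \times X$ is compact the $\mu_k^{\rm{Berg}}$ form a tight family, so by Prokhorov's theorem it suffices to show every weakly convergent subsequence has limit $\Delta_* \mu_{eq}(K, h^L)$; and a probability measure on $X \times X$ that is supported on $\Delta(X)$ and whose two projections to $X$ equal $\mu_{eq}(K, h^L)$ must coincide with $\Delta_* \mu_{eq}(K, h^L)$. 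The marginal statement is quickly disposed of: writing $\rho_k(x) := B_k(x, x)$, the orthonormality of the $s_i$ and the reproducing property give $\int_X |B_k(x, y)|^2_{(h^L)^k} \, d\mu(y) = \rho_k(x)$, so both projections of $\mu_k^{\rm{Berg}}$ equal the probability measure $\frac{1}{n_k} \rho_k \, d\mu$; under the Bernstein--Markov hypothesis this converges weakly to $\mu_{eq}(K, h^L)$, which is the first-order asymptotics of the Bergman density function --- precisely the point at which the pluripotential envelope $h^L_{K*}$ enters the $L^2$-picture --- and is well known, see \cite{BermanBouckBalls}, \cite{BerBoucNys} and the references therein (recall $n_k \sim \tfrac{{\rm vol}(L)}{n!} k^n$ and $\tfrac{n!}{k^n}\rho_k\,d\mu \to c_1(L, h^L_{K*})^n$ weakly).

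It remains to prove concentration on $\Delta(X)$, i.e. that for every $\delta > 0$,
\begin{equation*}
	I_k(\delta) := \frac{1}{n_k} \int_X \Big( \int_{\{ y : d(x, y) \geq \delta \}} |B_k(x, y)|^2_{(h^L)^k} \, d\mu(y) \Big) d\mu(x) \longrightarrow 0, \qquad k \to \infty .
\end{equation*}
The inner integral is at most $\int_X |B_k(x, y)|^2_{(h^L)^k} d\mu(y) = \rho_k(x)$, and $\tfrac{1}{n_k}\int_X \rho_k\,d\mu = 1$, so the whole content is that, for $\mu$-almost every $x$, the measure $|B_k(x, y)|^2_{(h^L)^k} d\mu(y)$ --- of total mass $\rho_k(x) \asymp k^n$ on the relevant set --- puts a vanishing fraction of its mass at distance $\geq \delta$ from $x$. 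This is the off-diagonal asymptotics of the Bergman (Christoffel--Darboux) kernel. Concretely, the key input I would establish is: for every $\eta > 0$ there is an open set $G$ with compact closure in the ample locus of $L$ such that $\mu_{eq}(K, h^L)(X \setminus G) < \eta$ and
\begin{equation*}
	\int_{\{ y : d(x, y) \geq \delta \}} |B_k(x, y)|^2_{(h^L)^k} \, d\mu(y) \leq \epsilon_k \cdot \rho_k(x) \quad \text{for all } x \in \overline{G}, \text{ with } \epsilon_k \to 0 ;
\end{equation*}
such a $G$ exists because $\mu_{eq}(K, h^L)$ is a probability measure carried by the ample locus and charging no pluripolar set. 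Granting this, split $I_k(\delta)$ at $G$: the part with $x \in G$ is $\leq \tfrac{\epsilon_k}{n_k}\int_X \rho_k\,d\mu = \epsilon_k \to 0$, and the part with $x \in X \setminus G$ is $\leq \tfrac{1}{n_k}\int_{X \setminus G}\rho_k\,d\mu$, whose $\limsup$ is $\leq \mu_{eq}(K, h^L)(X \setminus G) < \eta$ by the marginal convergence applied to the closed set $X \setminus G$. Hence $\limsup_k I_k(\delta) \leq \eta$; letting $\eta \to 0$ proves concentration, and with it the theorem.

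The main obstacle is the displayed off-diagonal estimate on $\overline{G}$. The gain it asserts is of polynomial order in $k$ --- it says $|B_k(x, y)|^2_{(h^L)^k} = o(\rho_k(x))$ for $d(x, y) \geq \delta$, with $\rho_k(x)$ of order $k^n$ --- hence lies below the $\tfrac{1}{k}\log$ scale on which the Bernstein--Markov inequality operates: the naive bound $|B_k(x, y)|^2_{(h^L)^k} \leq C e^{\epsilon k} \rho_k(x)$ is far too weak, and since $h^L$ is merely continuous and $L$ merely big there is no local Bergman kernel expansion available off the shelf. The guiding picture is the model case of a smooth, strictly positively curved metric, where the sub-mean-value inequality yields the classical Agmon-type off-diagonal decay $|B_k(x, y)|^2_{(h^L)^k} \leq C k^{2n} e^{-c\sqrt{k}\, d(x, y)}$, which is far stronger than needed; I would transport this to the situation at hand on $\overline{G}$ by a comparison/approximation argument --- perturbing $h^L$ to such a model metric and $\mu$ to a Bernstein--Markov measure for which the model decay holds, and absorbing the discrepancies using the monotonicity of $\rho_k$ in the pair $(h^L, \mu)$ together with the Bernstein--Markov inequality --- the control of these error terms being the delicate point. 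An equivalent bookkeeping is determinantal: $\mu_k^{\rm{Berg}}$ is, up to the factor $1/n_k$, the second correlation measure of the determinantal point process with projection kernel $B_k$, and the off-diagonal mass over a partition of $X$ into cells of diameter $< \delta$ is $\sum_{\text{cells}} \mathrm{Var}(N_{\text{cell}})$, which a Landau--Widom-type estimate controls by showing that the Bergman projection composed with multiplication by a cell indicator has only $o(n_k)$ eigenvalues bounded away from $\{0, 1\}$. Supplying the pluripotential input --- on the envelope $h^L_{K*}$ and on the extremal sections near the contact set of $K$ --- is what makes this step substantive.
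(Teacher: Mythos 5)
Your reduction of the theorem to two facts --- convergence of the marginals and concentration on the diagonal --- is correct and matches the paper's structure (the marginal statement is the paper's Theorem \ref{thm_diag}, due to Berman--Boucksom--Witt Nystr\"om, and your ``concentration'' step is precisely the content of the paper's Theorem \ref{thm_nomass}). Your bookkeeping around $G$ is also fine. The problem is the key step: you identify the off-diagonal estimate on $\overline{G}$ as ``the delicate point'' and then do not prove it, offering instead a heuristic plan of transporting Agmon-type decay from a model metric. That plan does not go through at the generality required, and the paper explicitly flags why. Example~1 of Section~\ref{sect_off_diag} (the great circle in $\mathbb{P}^1$ with the Fubini--Study metric) shows that even with a smooth, strictly positively curved $h^L$ and ample $L$, the off-diagonal mass decays only like $k^{-1}$, not exponentially --- so the ``$e^{-c\sqrt{k}\,d}$'' picture is already wrong when $\mu$ is not a volume form. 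Example~3 shows that pointwise off-diagonal bounds on $|B_k(x,y)|$ can fail entirely for big non-ample $L$. And the paper's discussion before these examples records that the spectral-gap mechanism underlying Agmon estimates breaks for merely semi-positive $h^L$ (Donnelly) and is unavailable in the non-pluripolar setting. Finally, ``absorbing the discrepancies using the monotonicity of $\rho_k$ in the pair $(h^L,\mu)$ together with the Bernstein--Markov inequality'' cannot work as stated: as you yourself note, the Bernstein--Markov inequality operates at the $\frac{1}{k}\log$ scale and only gives $e^{\epsilon k}$ factors, which swamp the polynomial-order gain you need.

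The paper fills the gap by a mechanism of a genuinely different nature, and it is worth contrasting with your plan. Rather than attempting any local estimate on $B_k(x,y)$, the paper argues \emph{globally by contradiction}: if $\int_{K_1\times K_2}\mu_k^{\rm Berg}\geq\epsilon$ along a subsequence, one perturbs the measure to $\mu':=\mu\cdot e^{-g}$ with $g$ a bump on $K_2$ supported away from $K_1$, notes that $\mu'$ is still Bernstein--Markov, and then compares the two Bergman kernels using only (i) the variational (extremal/peak-section) characterization of $B_k(x,x)$, (ii) the Cauchy--Schwarz inequality, and (iii) the \emph{diagonal} asymptotics of Theorem~\ref{thm_diag} applied to both $\mu$ and $\mu'$. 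The assumed off-diagonal mass lowers the $\mu'$-integral by a fixed amount $(1-e^{-1})\epsilon$, while the lower bound coming from the peak-section comparison forces the same limit as for $\mu$; the contradiction is immediate. So the paper never proves --- and in the stated generality could not prove --- the uniform-in-$x$ off-diagonal bound on $\overline{G}$ that your argument requires. Your outline is thus a correct reduction followed by an unproved and, in this generality, unprovable key lemma; the actual argument must proceed along a different, purely diagonal/variational route.
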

	\begin{rem}
		a) As we shall explain in Section \ref{sect_off_diag}, Theorem \ref{thm_off_diag} refines the convergence of Bergman measures, due to Berman-Boucksom-Witt Nyström \cite{BerBoucNys}, cf. Theorem \ref{thm_diag}.
		We, however, stress out that our proof depends on \cite{BerBoucNys}, and the only novel aspect of Theorem \ref{thm_off_diag} is the proof of the asymptotic concentration of the mass of $\mu_k^{\mathrm{Berg}}$ along the diagonal.
		\par 
		b) We refer to Section \ref{sect_off_diag} for the discussion on the sharpness of Theorem \ref{thm_off_diag}.
		\par 
		c) When $\mu$ is a volume form and $h^L$ is smooth, Theorem \ref{thm_off_diag} was established by Berman \cite[Theorem 1.6]{BermanEnvProj} by different techniques. 
		The validity of the general case of Theorem \ref{thm_off_diag} was left open in Zelditch \cite[after Theorem 2.5]{ZelditchErgodSurvey}.
	\end{rem}
	\par 
	To state our application of Theorem \ref{thm_off_diag}, for a fixed $f \in \ccal^0(X)$ and $k \in \nat^*$, we define $T_k(f) \in {\enmr{H^0(X, L^{\otimes k})}}$ as $T_k(f) := B_k \circ M_k(f)$, where $B_k : \ccal^0(X, L^{\otimes k}) \to H^0(X, L^{\otimes k})$ is the orthogonal (Bergman) projection to $H^0(X, L^{\otimes k})$ with respect to (\ref{eq_defn_l2}), and $M_k(f) : H^0(X, L^{\otimes k}) \to \ccal^0 (X, L^{\otimes k})$ is the multiplication map by $f$.
	\par 
	By considering the products $\langle T_k(f) s_1, s_2 \rangle_{{\textrm{Hilb}}_k(h^L, \mu)}$ for arbitrary $s_1, s_2 \in H^0(X, L^{\otimes k})$, we see that $T_k(f)$ depends solely on the restriction of $f$ to $K$.
	As by Tietze-Urysohn-Brouwer extension theorem, any function from $\ccal^0(K)$ admits an extension to $\ccal^0(X)$, we can thus extend the definition of $T_k(f)$ for any $f \in \ccal^0(K)$.
	\begin{defn}\label{defn_toepl_sch}
		A sequence of operators $T_k \in {\enmr{H^0(X, L^{\otimes k})}}$, $k \in \nat$, is called a \textit{Toeplitz operator} if there is $C > 0$, such that for any $k \in \nat$, $\| T_k \| \leq C$, where $\| \cdot \|$ is the operator norm, and there is $f \in \ccal^0(K)$, called the symbol of $\{T_k\}_{k = 0}^{+ \infty}$, so that for any $\epsilon > 0$, $p \in [1, +\infty[$, there is $k_0 \in \nat$, such that for every $k \geq k_0$, we have
		\begin{equation}\label{eq_toepl_schatten}
			\big \| T_k -  T_k(f) \big \|_p \leq \epsilon,
		\end{equation}
		where $\| \cdot \|_p$ is the $p$-Schatten norm, defined for an operator $A \in {\enmr{V}}$, of a finitely-dimensional Hermitian vector space $(V, H)$ as $\| A \|_p = (\frac{1}{\dim V} {\rm{Tr}}[|A|^p])^{\frac{1}{p}}$, $|A| := (A A^*)^{\frac{1}{2}}$.
	\end{defn}
	\begin{rem}
		a) The standard definition of Toeplitz operators uses the operator norm rather than the $p$-Schatten norm in (\ref{eq_toepl_schatten}), and the measure $\mu$ is taken to be a volume form, cf. \cite{MaHol}. 
		The variant from Definition \ref{defn_toepl_sch} is a mix of two definitions previously introduced by the author in \cite{FinSubmToepl}, \cite{FinEigToepl} under the names “Toeplitz operator of Schatten class" and “Generalized Toeplitz operator". 
		Some similar definitions appeared before in Grenander-Szeg{\"o} \cite[\S 7.4]{GrenanSzego}.
		As we work here exclusively with this version of Toeplitz operators, we simplify our terminology accordingly.
		\par 
		b) 
		Standard estimates, cf. (\ref{eq_schatted_bnds}), imply that it suffices to verify (\ref{eq_toepl_schatten}) for a single $p \in [1, +\infty[$.
	\end{rem}
	\par 
	As we shall explain in Proposition \ref{prop_symb_well_def}, if a Toeplitz operator with symbol $f \in \ccal^0(K)$ is also a Toeplitz operator with symbol $g \in \ccal^0(K)$, then the restrictions of $f$ and $g$ over 
	\begin{equation}\label{eq_kprim}
		K' := {\rm{supp}}(\mu_{\mathrm{eq}}(K, h^L)) \subset K
	\end{equation}
	coincide.
	Likewise, if the restrictions of $f$ and $g$ over $K'$ coincide, then any Toeplitz operator with symbol $f$ is also a Toeplitz operator with symbol $g$.
	Henceforth, the \textit{symbol map} sending a Toeplitz operator with symbol $f$ to the restriction of $f$ on $K'$ is well-defined.	
	\par 
	When $\mu$ is a volume form on $X$, $L$ is an ample line bundle and $h^L$ is a smooth metric with a strictly positive curvature, Toeplitz operators have found numerous applications in complex and algebraic geometry, see \cite{BerndtProb}, \cite{MaZhangSuperconnBKPubl}, \cite{FinSubmToepl}, \cite{FinHNII}. 
	It is impossible to provide a comprehensive survey of all the results in this direction, so we only mention a few works: \cite{BoutGuillSpecToepl}, \cite{BordMeinSchli}, \cite{CharlesToepl}, \cite{MaMarinGeneralizedBK}, \cite{MaMarBTKah}, and \cite{MaHol} for classical results, and \cite{AnconaFloch}, \cite{HerrmanHsiaoMarinShen}, \cite{DrewLiuMar2}, \cite{GalassoHsiao}, \cite{DelepSpec1}, \cite{OodOffDiag} for more recent developments.
	\par 
	As we explain in Section \ref{sect_mat}, taking $X := \mathbb{P}^1$, $L := \mathscr{O}(1)$, and letting $\mu$ be the Lebesgue measure on a great circle, Definition \ref{defn_toepl_sch} recovers the classical notion of Toeplitz matrices. 
	These were first introduced by Toeplitz in \cite{Toeplitz} and later studied systematically by Szeg\H{o} in \cite{SzegoFST}, \cite{SzegoSND}.  
	Many of the results discussed in the complex geometric setting above have analogues in the classical theory of Toeplitz matrices -- often established much earlier.  
	For a comprehensive survey of this classical theory, we refer the reader to the monographs by Grenander-Szeg{\"o} \cite{GrenanSzego} and Nikolski \cite{NikolskiBook}.
	\par 
	We unify these parallel theories, by showing that the key results continue to hold in much greater generality.
	More specifically, our second main result of this paper goes as follows.
	\begin{thm}\label{thm_alg}
		The space of Toeplitz operators forms an algebra under composition, and the symbol map is an algebra morphism when $\ccal^0(K')$ is endowed with the standard multiplication.
		In particular for any $f, g \in \ccal^0(K)$, $p \in [1, +\infty[$, we have 
		\begin{equation}\label{eq_toepl_comp_as}
			\lim_{k \to \infty} \| T_k(f) \circ T_k(g) - T_k(f \cdot g) \|_p = 0.
		\end{equation}
	\end{thm}
	\begin{rem}\label{rem_alg}
	 	Bordemann–Meinrenken–Schlichenmaier in \cite{BordMeinSchli} showed that when $\mu$ is a volume form on $X$, $L$ is an ample line bundle and $h^L$ is a smooth metric with a strictly positive curvature, the analogous statement holds if the $p$-Schatten norm in Definition \ref{defn_toepl_sch} is replaced by the operator norm. 
	 	The author in \cite{FinSubmToepl} proved that under the same assumptions, Theorem \ref{thm_alg} remains valid when the space $\mathcal{C}^0(X)$ in Definition \ref{defn_toepl_sch} is replaced by $L^\infty(X)$.
	 	It remains open if in our setting it is possible to get the analogues of the above results. 
	\end{rem}
	It would be interesting to identify natural conditions on $\mu$ that allow for a refinement of (\ref{eq_toepl_comp_as}) to include lower-order terms in $k$, and ultimately to analyze the commutator $\frac{1}{k} [T_k(f), T_k(g)]$ and its connections to symplectic geometry, particularly in light of the foundational works \cite{BordMeinSchli} and Ma-Marinescu \cite{MaMarBTKah}.
	\par 
	As an application of Theorem \ref{thm_alg}, we get the following equidistribution result for the spectral measures of Toeplitz operators.
	\begin{thm}\label{thm_distr}
		For any Toeplitz operator $\{ T_k, k \in \nat \}$ with symbol $f \in \ccal^0(K)$, and any continuous $g: \real \to \real$, we have
		\begin{equation}
			\lim_{k \to \infty} \frac{1}{n_k} \sum_{\lambda \in {\rm{Spec}} (T_k)} g(\lambda)
			=
			\int_X g(f(x)) d \mu_{eq}(K, h^L)(x).
		\end{equation}
		In particular, the asymptotic shape of the spectrum of a Toeplitz operator depends solely on the image of the symbol map.
	\end{thm}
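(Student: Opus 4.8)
\textbf{Proof proposal for Theorem \ref{thm_distr}.}
The plan is to deduce the statement from the algebra structure of Theorem \ref{thm_alg}, an asymptotic formula for normalized traces of Toeplitz operators, and the Weierstrass approximation theorem. Since the right-hand side only makes sense when $f$ is real-valued on $K$, I assume this throughout; then $T_k(f)$ is self-adjoint, as one checks from (\ref{eq_defn_l2}) and the fact that $B_k$ is the orthogonal projection (so $T_k(f)^* = T_k(\bar f) = T_k(f)$). Note also that only the restriction of $g$ to a fixed compact interval $I \subset \real$ enters, since $\|T_k\| \leq C$ forces $\mathrm{Spec}(T_k) \subset \{|\lambda| \leq C\}$ and $f(K) \subset I$; so we may assume $g$ bounded.

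First I would establish the key lemma: for every Toeplitz operator $\{S_k\}_k$ with symbol $\phi \in \ccal^0(K)$, extended arbitrarily to $\ccal^0(X)$, one has $\lim_{k \to \infty} \frac{1}{n_k} \tr{S_k} = \int_X \phi \, d\mu_{\mathrm{eq}}(K, h^L)$. Indeed $\frac{1}{n_k}|\tr{A}| \leq \|A\|_1$ for every $A \in \enmr{H^0(X, L^{\otimes k})}$, so $\frac{1}{n_k}|\tr{S_k} - \tr{T_k(\phi)}| \leq \|S_k - T_k(\phi)\|_1 \to 0$ by Definition \ref{defn_toepl_sch}, and it suffices to treat $S_k = T_k(\phi)$. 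Evaluating the trace in an orthonormal basis $s_1, \dots, s_{n_k}$ of $(H^0(X, L^{\otimes k}), {\textrm{Hilb}}_k(h^L, \mu))$ and using that $B_k$ is the orthogonal projection,
\begin{equation*}
	\frac{1}{n_k} \tr{T_k(\phi)} = \frac{1}{n_k} \sum_{i = 1}^{n_k} \int_X \phi(x) |s_i(x)|^2_{(h^L)^k} \, d\mu(x) = \int_X \phi \, d\beta_k, \qquad \beta_k := \frac{1}{n_k} |B_k(x, x)|_{(h^L)^k} \, d\mu(x).
\end{equation*}
Since $\beta_k$ is exactly the push-forward of $\mu_k^{\rm{Berg}}$ under either projection $X \times X \to X$, cf. (\ref{eq_push_forw}), Theorem \ref{thm_off_diag} (equivalently, directly Theorem \ref{thm_diag}) gives $\beta_k \to \mu_{\mathrm{eq}}(K, h^L)$ weakly, and the lemma follows from continuity of $\phi$.

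Next I would treat $g(t) = t^m$, $m \in \nat$. By Theorem \ref{thm_alg} the iterated composition $\{T_k^m\}_k$ is again a Toeplitz operator, with symbol $f^m$, so, using $\sum_{\lambda \in \mathrm{Spec}(T_k)} \lambda^m = \tr{T_k^m}$ (eigenvalues with algebraic multiplicity), the lemma applied to $\{T_k^m\}_k$ yields $\frac{1}{n_k}\sum_\lambda \lambda^m \to \int_X f^m \, d\mu_{\mathrm{eq}}(K, h^L)$. As Toeplitz operators are closed under scalar linear combinations (with the symbol depending linearly) and $T_k(1) = \mathrm{Id}$, this holds with $t^m$, $f^m$ replaced by $P$, $P \circ f$ for any polynomial $P$. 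For general continuous $g$: given $\epsilon > 0$, pick a polynomial $P$ with $\sup_I |g - P| < \epsilon$; since the $n_k$ eigenvalues of $T_k$ and the values of $f$ on $K$ all lie in $I$, replacing $g$ by $P$ changes both sides by at most $\epsilon$, so $\limsup_k |\frac{1}{n_k}\sum_\lambda g(\lambda) - \int_X g(f)\,d\mu_{\mathrm{eq}}| \leq 2\epsilon$, and $\epsilon \to 0$ finishes. The last assertion is immediate, since the right-hand side sees $f$ only through its restriction to $K' = \mathrm{supp}(\mu_{\mathrm{eq}}(K, h^L))$, i.e. through the symbol map.

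The proof is soft — all the substance sits in Theorem \ref{thm_alg} and the equidistribution of Bergman measures — so the only genuine (and minor) obstacle is that a Toeplitz operator with real symbol need not be self-adjoint: $\mathrm{Spec}(T_k)$ may contain non-real points, and then the reduction to polynomials above is not literally meaningful. I would resolve this by replacing each $\lambda$ by $\mathrm{Re}\,\lambda$ in the Weierstrass step, the error being controlled by Schur's inequality: since $T_k(f)^* = T_k(f)$,
\begin{equation*}
	\frac{1}{n_k} \sum_{\lambda \in \mathrm{Spec}(T_k)} |\mathrm{Im}\,\lambda|^2 \leq \frac{1}{4}\,\| T_k - T_k^* \|_2^2 \leq \| T_k - T_k(f) \|_2^2 \longrightarrow 0,
\end{equation*}
whence $\frac{1}{n_k}\sum_\lambda |P(\lambda) - P(\mathrm{Re}\,\lambda)| \leq L \cdot \frac{1}{n_k}\sum_\lambda |\mathrm{Im}\,\lambda| \leq L\big(\frac{1}{n_k}\sum_\lambda |\mathrm{Im}\,\lambda|^2\big)^{1/2} \to 0$, with $L$ a Lipschitz constant of $P$ on $\{|z| \leq C\}$. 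After this substitution the argument goes through verbatim, with $g(\lambda)$ understood via an arbitrary continuous extension of $g$ past $\real$ (the statement is literal when $T_k$ is self-adjoint).
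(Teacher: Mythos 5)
Your proposal is correct and follows essentially the same route as the paper. The paper writes $\sum_\lambda g(\lambda) = \mathrm{Tr}[g(T_k)]$, invokes Corollary \ref{cor_func_calc} (functional calculus closes the Toeplitz algebra, so $\{g(T_k)\}_k$ is Toeplitz with symbol $g\circ f$), and then Lemma \ref{lem_trace_toepl} ($\tfrac{1}{n_k}\mathrm{Tr}[T_k(\phi)] \to \int\phi\,d\mu_{\mathrm{eq}}$). Your ``key lemma'' is identical to Lemma \ref{lem_trace_toepl}, with the same derivation through the diagonal of the Bergman kernel and (\ref{eq_push_forw}), and your monomial--polynomial--Weierstrass ladder is precisely what the paper encapsulates inside the proof of Corollary \ref{cor_func_calc}; you have simply inlined it rather than routing it through the functional-calculus corollary.

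One point where you go beyond the paper, and usefully so: Definition \ref{defn_toepl_sch} does not require the $T_k$ to be self-adjoint, yet the statement of Theorem \ref{thm_distr} (with $g:\real\to\real$ applied to $\lambda\in\mathrm{Spec}(T_k)$) and the paper's proof (which speaks of $g(T_k)$ and, in Corollary \ref{cor_func_calc}, of the spectrum lying in a ``compact interval'') implicitly treat $T_k$ as self-adjoint. Your replacement of $\lambda$ by $\Re\lambda$, with the error controlled by Schur's inequality $\tfrac{1}{n_k}\sum_\lambda|\Im\lambda|^2 \leq \tfrac14\|T_k - T_k^*\|_2^2 \leq \|T_k - T_k(f)\|_2^2 \to 0$, is a clean way to make the statement meaningful (and true) for arbitrary Toeplitz operators in the sense of Definition \ref{defn_toepl_sch}, not just self-adjoint ones. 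That observation would be worth recording explicitly; the rest is a faithful reconstruction of the paper's argument.
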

	\begin{rem}
		a) When $\{ T_k, k \in \nat \}$ forms a Toeplitz matrix, the result recovers Szeg\"o first limit theorem \cite{SzegoFST}, see Section \ref{sect_mat} for details.
		Under the same assumptions as in Remark \ref{rem_alg}, Theorem \ref{thm_distr} was established by Boutet de Monvel-Guillemin \cite{BoutGuillSpecToepl}, see also Ma-Marinescu \cite{MaMarinGeneralizedBK}, \cite{MaMarBTKah}, \cite[\S 7]{MaHol} and \cite[Appendix A]{FinMaVol} for a more direct proof relying on Bergman kernels.
		\par 
		b) When the line bundle $L$ is endowed with a smooth Hermitian metric with semi-positive curvature, and $\mu$ is a volume form, Theorem \ref{thm_distr} was established by Berman \cite[Theorem 2.7]{BermanSuperToepl}.
	\end{rem}
	\par 
	This article is organized as follows.  
	In Section \ref{sect_off_diag}, we prove Theorem \ref{thm_off_diag} using the diagonal asymptotics of the Bergman measure.  
	Section \ref{sect_toepl} is devoted to the study of Toeplitz operators, where we establish Theorems \ref{thm_alg} and \ref{thm_distr}.  
	In Section \ref{sect_mat}, we demonstrate how the main results of this paper allow us to recover classical results on Toeplitz matrices and orthogonal polynomials.
	Finally, in Section \ref{sect_sing}, we discuss an application to Bergman kernels on singular spaces.
	\par
	\textbf{Acknowledgement.}
	The author acknowledges the support of CNRS, École Polytechnique and the partial support of ANR projects QCM (ANR-23-CE40-0021-01), AdAnAr (ANR-24-CE40-6184) and STENTOR (ANR-24-CE40-5905-01).

	\section{Off-diagonal estimates on the Bergman measure}\label{sect_off_diag}
	
	The main goal of this section is to establish that the sequence of measures $\mu_k^{\rm{Berg}}$ does not asymptotically place mass away from the diagonal.
	As a consequence, we establish Theorem \ref{thm_off_diag}.
	More specifically, the main result of the section goes as follows.
	\begin{thm}\label{thm_nomass}
		Under the same notations and assumptions as in Theorem \ref{thm_off_diag}, for any compact subsets $K_1, K_2 \subset X$ verifying $K_1 \cap K_2 = \emptyset$, we have
		\begin{equation}
			\lim_{k \to \infty} \int_{K_1 \times K_2} \mu_k^{\rm{Berg}} = 0.
 		\end{equation}
	\end{thm}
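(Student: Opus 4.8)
The plan is to estimate $\int_{K_1 \times K_2} \mu_k^{\rm{Berg}} = \frac{1}{n_k}\int_{K_1 \times K_2} |B_k(x,y)|^2_{(h^L)^k}\, d\mu(x)\, d\mu(y)$ by exploiting the reproducing property of the Bergman kernel together with the Bernstein-Markov hypothesis. The first step is to recall the pointwise identity for the Bergman kernel density $B_k(x,x) := |B_k(x,x)|_{(h^L)^k} = \sum_i |s_i(x)|^2_{(h^L)^k}$, which is the density of the pushforward $\pi_{1*}\mu_k^{\rm{Berg}}$ with respect to $\mu$; by Theorem \ref{thm_diag} (the Berman-Boucksom-Witt Nyström result), $\frac{1}{n_k} B_k(x,x)\, d\mu(x)$ converges weakly to $\mu_{\rm eq}(K, h^L)$. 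The second step is the key reproducing inequality: for fixed $x$, the section $y \mapsto B_k(x,y)$ has squared $\textrm{Hilb}_k$-norm equal to $B_k(x,x)$, so by the Bernstein-Markov property \eqref{eq_bm_clas} we get, for any $\epsilon > 0$, a constant $C>0$ with $\sup_{y \in K} |B_k(x,y)|^2_{(h^L)^k} \leq C \exp(\epsilon k)\, B_k(x,x)$ for all $k$ and all $x$. This is the off-diagonal analogue of the usual sup-estimate and is where the Bernstein-Markov hypothesis enters crucially.

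The heart of the argument, and the main obstacle, is to upgrade this into genuine exponential decay of $|B_k(x,y)|$ when $x$ and $y$ are separated. The strategy I would follow is to estimate the ratio $|B_k(x,y)|^2_{(h^L)^k} / (B_k(x,x)^{1/2} B_k(y,y)^{1/2})$ in terms of the psh envelope. Concretely, $\log |B_k(x,y)|_{(h^L)^k}$, viewed as a function of $x$ (with $y$ fixed), is $\frac{1}{2}\log$ of the modulus squared of a holomorphic section, hence its $\frac{1}{k}$-normalization is dominated by the envelope $\phi_K$ associated to $(K, h^L)$ up to $o(1)$ — using the Bernstein-Markov estimate to control the $L^\infty(K)$-norm by the $\textrm{Hilb}_k$-norm $B_k(x,x)^{1/2}$, and then the submean value / extremal function machinery from \cite{BerBoucNys}, \cite{BermanBouckBalls}. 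The upshot should be a two-sided control showing that on $K_1 \times K_2$ with $K_1 \cap K_2 = \emptyset$, outside a negligible set, $\frac{1}{k}\log |B_k(x,y)|_{(h^L)^k}$ is bounded above by a quantity that is strictly negative — this strict negativity is precisely the statement that the extremal psh function "sees" the separation of $K_1$ and $K_2$, and is the delicate point where one must rule out that the envelope degenerates.

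Once exponential decay $|B_k(x,y)|^2_{(h^L)^k} \leq C\exp(-\delta k)\, B_k(x,x)^{1/2} B_k(y,y)^{1/2}$ is available on a set of full $\mu \times \mu$-measure in $K_1 \times K_2$ (for some $\delta > 0$), the conclusion follows by Cauchy-Schwarz:
\begin{equation*}
	\frac{1}{n_k}\int_{K_1 \times K_2} |B_k(x,y)|^2_{(h^L)^k}\, d\mu(x)\, d\mu(y)
	\leq
	\frac{C \exp(-\delta k)}{n_k} \Big( \int_K B_k(x,x)^{1/2}\, d\mu(x)\Big)^2
	\leq
	C' \exp(-\delta k) \cdot \frac{(\mu(K))\int_K B_k(x,x)\, d\mu(x)}{n_k},
\end{equation*}
and since $\int_K B_k(x,x)\, d\mu(x) = n_k$, the right-hand side is $O(\exp(-\delta k)) \to 0$. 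The subtlety I would be most careful about is that $h^L$ is merely continuous, $L$ merely big, so the envelope $h^L_{K*}$ may have minimal singularities and the "negligible exceptional set" in the envelope estimates must be genuinely $\mu$-null — this is where non-pluripolarity of $\mu$ is used, exactly as in \cite{BerBoucNys}. Handling the pluripolar locus carefully, and making the passage from the weak-$*$ statement of Theorem \ref{thm_diag} to the pointwise-almost-everywhere bounds needed here, is the technical core.
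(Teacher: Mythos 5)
Your strategy hinges on establishing exponential off-diagonal decay of the form $|B_k(x,y)|^2_{(h^L)^k} \leq C e^{-\delta k}\, B_k(x,x)^{1/2} B_k(y,y)^{1/2}$ for $(x,y) \in K_1 \times K_2$, and this estimate is \emph{false} in the generality of the theorem. Example 1 of Section \ref{sect_off_diag} of the paper makes this explicit: with $X = \mathbb{P}^1$, $L = \mathscr{O}(1)$, $h^L$ the Fubini--Study metric and $\mu$ the Lebesgue measure on a great circle $\mathbb{S}^1$, one has $|B_k(x,x)| = k$ for all $x \in \mathbb{S}^1$ while $|B_k(x,-x)| = 1$ for odd $k$, so the ratio you wish to bound by $e^{-\delta k}$ equals $1/k$ at antipodal points. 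Worse, for disjoint arcs $K_1, K_2 \subset \mathbb{S}^1$ one computes $\int_{K_1 \times K_2}\mu_k^{\mathrm{Berg}} \sim c(K_1,K_2)\, k^{-1}$ -- the off-diagonal mass decays only polynomially, so any proof built on exponential pointwise decay is doomed. The exponential picture you invoke belongs to the case of a smooth positively curved metric on an ample line bundle and a volume form $\mu$ (Christ, Ma--Marinescu), where it rests on the spectral gap of the Kodaira Laplacian -- unavailable here, as the paper emphasizes. The envelope machinery cannot rescue it: when $x$ and $y$ both lie inside $K$, the extremal function $\phi_K$ does not distinguish $K_1$ from $K_2$, and there is no mechanism forcing $\frac{1}{k}\log|B_k(x,y)|_{(h^L)^k}$ to be strictly negative; your own worry that ``one must rule out that the envelope degenerates'' is not a delicate point but the fatal one.

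The paper's actual argument is a global variational proof by contradiction that never appeals to pointwise off-diagonal decay. Assume along a subsequence $\int_{K_1\times K_2}\mu_k^{\mathrm{Berg}}\geq\epsilon>0$, and perturb the measure to $\mu' := \mu\cdot e^{-g}$ with $g$ a bump equal to $1$ on $K_2$ and supported away from $K_1$; then $\mu'$ is still Bernstein--Markov, so Theorem \ref{thm_diag} applies to both $\mu$ and $\mu'$. Comparing peak sections for the two inner products yields $\int_X |s_{k,x}|^2 d\mu'\geq |B_k(x,x)|/|B_k'(x,x)|$, and integrating against a cut-off $f$ supported near $K_1$ together with Cauchy--Schwarz and Theorem \ref{thm_diag} (applied to both $\mu$ and $\mu'$) gives a lower bound for $\frac{1}{n_k}\iint f(x)|B_k(x,y)|^2_{(h^L)^k}\,d\mu'(x)\,d\mu'(y)$ tending to $\int f\,d\mu_{\mathrm{eq}}(K,h^L)$. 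The same quantity, bounded from above using the deformed weight and the presumed $\epsilon$ of off-diagonal mass, tends to at most $\int f\,d\mu_{\mathrm{eq}}(K,h^L) - (1-e^{-1})\epsilon$, a contradiction. This extracts decay of the total off-diagonal mass purely from diagonal asymptotics, which -- given the example above -- is precisely what the generality of the statement requires.
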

	\par 
	Before turning to the proof of Theorem \ref{thm_nomass}, we discuss the extent to which the result is sharp.
	Let us consider the case where $L$ is an ample line bundle, $h^L$ is a smooth Hermitian metric with strictly positive curvature, and $\mu$ is a volume form.  
	The off-diagonal expansion of the Bergman kernel in this context has been thoroughly investigated by Christ \cite{ChristOffDiagDim1} and by Ma-Marinescu \cite{MaMarOffDiag}, the latter also providing a comprehensive survey of such estimates.
	Their results imply that there exists a constant $c > 0$, depending on $K_1$ and $K_2$, such that $\int_{K_1 \times K_2} \mu_k^{\mathrm{Berg}} \leq \exp(-c \sqrt{k})$, and a similar bound holds for the Bergman kernel away from the diagonal.
	\par 
	We underline that the above exponential estimate hinges on the spectral gap property of the Kodaira Laplacian, a property which fails even for smooth semi-positive metrics $h^L$ on non-ample $L$, see Donnelly \cite{DonnellyGap}. 
	In the broader setting of non-pluripolar measures considered here, the spectral approach does not seem to be applicable.
	Moreover, the following example suggests that one cannot substantially improve upon Theorem \ref{thm_nomass}.
	\par 
	\textbf{Example 1.}
	We embed $\mathbb{S}^1$ in $X := \mathbb{P}^1$ as one of the great circles (for concreteness given by $\theta \mapsto [1: \exp(i \theta)] \in \mathbb{P}^1$, $\theta \in [0, 2 \pi[$, where $[1 : z] \in \mathbb{P}^1$, $z \in \comp$, is a standard affine chart) and denote by $\mu$ the Lebesgue measure on $\mathbb{S}^1$, viewed as a measure on $X$.
	Remark that as $\mathbb{S}^1$ is totally real, the measure $\mu$ is non-pluripolar, see \cite{SadullaevReal}, cf. \cite[Exercise 4.39.5]{GuedjZeriahBook}. 
	We then take $L := \mathscr{O}(1)$ endowed with the Fubini-Study metric $h^{FS}$.
	By the results recalled in the Introduction, the measure $\mu$ is Bernstein-Markov for $(\mathbb{S}^1, h^{FS})$. 
	\par 
	An easy calculation shows that with respect to the $L^2$-product associated with the above $\mu$, the standard basis of monomials of $H^0(X, L^{\otimes k})$ given by $z^i$, $i = 0, \ldots, k - 1$, in the already mentioned affine chart, is orthonormal.
	Hence, we conclude that for any $x, y \in \mathbb{S}^1$, we have
	\begin{equation}
		B_k(x, y) = \sum_{i = 0}^{k - 1} x^i \cdot \overline{y}^i.
	\end{equation}
	Using this formula, the reader will first observe that $|B_k(x, x)| = k$ for all $x \in \mathbb{S}^1$, $k \in \mathbb{N}$. 
	Also, it immediately follows that for odd $k$ one has $|B_k(x, -x)| = 1$. 
	As a consequence, no exponential decay occurs.  
	Moreover, one checks that for non-intersecting non-empty intervals $K_1$ and $K_2$, we have $\int_{K_1 \times K_2} \mu_k^{\mathrm{Berg}} \sim c(K_1, K_2) \cdot k^{-1}$ for some $c(K_1, K_2) > 0$, which rules out any possibility of exponential decay of the mass.
	\par 
	Our next example will indicate that the bigness assumption on $L$ is crucial for Theorem \ref{thm_nomass}.
	\par 
	\textbf{Example 2.}
	Consider $X := X_1 \times X_2$, where $X_1$, $X_2$ are connected projective complex manifolds of positive dimensions, and let $L := \pi_2^* L_2$, where $L_2$ is an ample line bundle over $X_2$, and $\pi_2 : X \to X_2$ is the natural projection.
	We endow $L_2$ with a smooth Hermitian metric $h^L_2$ with positive curvature, and let $h^L := \pi_2^* h^L_2$.
	Endow $X$ with a volume form $\mu$, given by the wedge product of the pull-backs of volume forms $\mu_1$ on $X_1$ and $\mu_2$ on $X_2$.
	Then the Bergman kernel $B_k(x, y)$, $x , y \in X$, associated with $\mu$ and $h^L$ will relate to the Bergman kernel $B_{k, 2}(x, y)$, $x , y \in X_2$, associated with $\mu_2$ and $h^L_2$ as follows $B_k(x, y) = B_{k, 2}(\pi_2(x), \pi_2(y)) / \int_{X_1} d \mu_1$.
	If $K_{1, 1}, K_{1, 2} \subset X_1$ are two non-intersecting compact subsets of positive volume, then for the non-intersecting compact subsets $K_1 := K_{1, 1} \times X_2, K_2 := K_{1, 2} \times X_2$ of $X$, we get 
	\begin{multline}
		\int_{K_1 \times K_2} |B_k(x, y)|^2_{(h^L)^k} \cdot d \mu(x) \cdot d \mu (y)
		\\
		=
	 	\frac{\int_{K_{1, 1}} d \mu_1}{\int_{X_1} d \mu_1}
	 	\cdot
	 	\frac{\int_{K_{1, 2}} d \mu_1}{\int_{X_1} d \mu_1}
	 	\cdot
	 	\int_{X_2 \times X_2} |B_k(x, y)|^2_{(h^L_2)^k} \cdot d \mu_2 (x) \cdot d \mu_2 (y).
	\end{multline}
	And since $\int_{X_2 \times X_2} |B_k(x, y)|^2_{(h^L_2)^k} d \mu_2 (x) \cdot d \mu_2 (y) = \dim H^0(X_2, L_2^{\otimes k})$, see (\ref{eq_push_forw}), the above identity shows that Theorem \ref{thm_nomass} doesn't hold in the above setting.
	\par 
	In our final example we point out that Theorem \ref{thm_nomass} cannot be improved by a pointwise estimate on the off-diagonal Bergman kernel.
\par 
	\textbf{Example 3.}
	Consider a projective complex manifold $X_1$ of positive dimension $n$, endowed with an ample line bundle $L_1$.
	We endow $L_1$ with a smooth Hermitian metric $h^L_1$ of positive curvature.
	Consider a blow-up $\pi : X \to X_1$ at a point $x_1 \in X_1$, and let $L := \pi^* L_1$, $h^L := \pi^* h^L_1$.
	Endow $X_1$ with a volume form $\mu_1$ and define $\mu := \pi^* \mu_1$.
	Then the Bergman kernel $B_k(x, y)$, $x , y \in X$, associated with $\mu$ and $h^L$ will relate to the Bergman kernel $B_{k, 1}(x, y)$, $x , y \in X_1$, associated with $\mu_1$ and $h^L_1$ as $B_k(x, y) = B_{k, 1}(\pi(x), \pi(y))$.
	We see in particular that for any $x, x' \in X$ so that $\pi(x), \pi(x') = x_1$, we have $|B_k(x, x')|_{(h^L)^k} = B_{k, 1}(x_1, x_1)$.
	But by the result of Tian \cite{TianBerg}, cf. discussion after Theorem \ref{thm_diag}, $B_{k, 1}(x_1, x_1) \sim k^n$, as $k \to \infty$, and so in Theorem \ref{thm_nomass} one cannot replace the mass of the measure by the pointwise bound on the Bergman kernel.
	\par 
	Now, quite surprisingly, our main ingredient in the proof of Theorem \ref{thm_nomass} is based on the study of the Bergman kernel \textit{on the diagonal}.
	We rely more specifically on one of the main results of Berman-Boucksom-Witt Nyström \cite{BerBoucNys} (see also Bloom-Levenberg \cite{BloomLeven1,BloomLeven2} for earlier results in this direction, and Bayraktar-Bloom-Levenberg \cite{BayrBloomLeven} for further developments), which also plays a key role in the proof of Theorem \ref{thm_distr}.
	\begin{thm}[{\cite[Theorem B]{BerBoucNys}}]\label{thm_diag}
		Under the same notations and assumptions as in Theorem \ref{thm_off_diag}, the sequence of measures $\frac{1}{n_k} |B_k(x, x)| \cdot d \mu(x)$ on $X$ converges weakly, as $k \to \infty$, to the equilibrium measure $\mu_{eq}(K, h^L)$.
	\end{thm}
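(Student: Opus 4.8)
The statement is Theorem B of \cite{BerBoucNys}, and the following is the shape of its proof. Write $\rho_k(x) := B_k(x,x) = \sum_{i=1}^{n_k}|s_i(x)|^2_{(h^L)^k}$ for the Bergman function. Since the $s_i$ are $\mathrm{Hilb}_k(h^L,\mu)$-orthonormal, $\int_X \rho_k\, d\mu = n_k$, so $\tfrac{1}{n_k}\rho_k\, d\mu$ is a probability measure, and $\mu_{eq}(K,h^L)$ is a probability measure supported in $K$ by \cite{BermanBouckBalls}. Hence it suffices to prove
\[
\frac{1}{n_k}\int_X u\,\rho_k\, d\mu \;\xrightarrow[k\to\infty]{}\; \int_X u\, d\mu_{eq}(K,h^L)
\]
for every $u\in\ccal^0(X)$; weak convergence of the (probability) measures then follows.

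For $u\in\ccal^0(X)$ and $t\in\real$, let $\mathrm{Hilb}_k(h^Le^{-tu},\mu)$ be the Hermitian form obtained by replacing $h^L$ with $h^Le^{-tu}$ in (\ref{eq_defn_l2}), and set
\[
\mathcal{L}_k(tu) := \frac{1}{k\,n_k}\Big(\log\det\mathrm{Hilb}_k(h^L,\mu) - \log\det\mathrm{Hilb}_k(h^Le^{-tu},\mu)\Big),
\]
the determinants being taken in any fixed basis of $H^0(X,L^{\otimes k})$ (the difference of logs is basis-independent). By Jacobi's formula, differentiated at $t=0$ in a $\mathrm{Hilb}_k(h^L,\mu)$-orthonormal basis, one gets $\tfrac{d}{dt}\big|_{t=0}\mathcal{L}_k(tu) = \tfrac{1}{n_k}\int_X u\,\rho_k\, d\mu$. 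Moreover $t\mapsto\log\det\mathrm{Hilb}_k(h^Le^{-tu},\mu)$ is convex (a standard convexity property of log-determinants of $L^2$ Gram matrices), so each $t\mapsto\mathcal{L}_k(tu)$ is concave on $\real$.

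The main analytic input is the convergence, for each $u\in\ccal^0(X)$,
\[
\mathcal{L}_k(u)\;\xrightarrow[k\to\infty]{}\;\frac{1}{\mathrm{vol}(L)}\big(\mathcal{E}_{\mathrm{eq}}(h^Le^{-u}) - \mathcal{E}_{\mathrm{eq}}(h^L)\big),
\]
where $\mathcal{E}_{\mathrm{eq}}(h) := \mathcal{E}(h_{K*})$ is the Monge--Ampère (Aubin--Mabuchi) energy of the psh envelope $h_{K*}$ associated with $(K,h)$, the wedge powers being interpreted in the non-pluripolar sense of \cite{BEGZ} since $L$ is only big. This is the ``growth of $L^2$-balls equals energy at equilibrium'' theorem of Berman--Boucksom: the Bernstein--Markov hypothesis (\ref{eq_bm_clas}) is precisely what lets one replace $\det\mathrm{Hilb}_k$ by the $L^\infty$ Chebyshev/transfinite-diameter functional attached to $\mathrm{Ban}_k^\infty(K,h^L)$ up to an $o(k\,n_k)$ error; a Zaharjuta-type sub/super-multiplicativity (obtained by multiplying sections) then produces the limit, which pluripotential theory identifies with $\mathcal{E}_{\mathrm{eq}}$, e.g.\ via directional Chebyshev constants integrated over the Okounkov body. \textbf{This step is the crux}; the rest is formal.

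Granting it, one differentiates the limit. By the differentiability of the energy at equilibrium (again Berman--Boucksom, built on the orthogonality relation for the envelope and on the continuity of the non-pluripolar Monge--Ampère operator of \cite{BEGZ}), $\tfrac{d}{dt}\big|_{t=0}\mathcal{E}_{\mathrm{eq}}(h^Le^{-tu}) = \int_X u\, c_1(L,h^L_{K*})^n$, so the right-hand side above is differentiable at $t=0$ with derivative $\tfrac{1}{\mathrm{vol}(L)}\int_X u\, c_1(L,h^L_{K*})^n = \int_X u\, d\mu_{eq}(K,h^L)$. Finally, $t\mapsto\mathcal{L}_k(tu)$ are concave functions converging pointwise to a function finite and differentiable at $0$; the standard fact that pointwise limits of concave functions have converging derivatives at points of differentiability of the limit then gives $\tfrac{d}{dt}\big|_{0}\mathcal{L}_k(tu)\to\tfrac{d}{dt}\big|_{0}\mathcal{L}_\infty(tu)$, i.e.\ $\tfrac{1}{n_k}\int_X u\,\rho_k\, d\mu\to\int_X u\, d\mu_{eq}(K,h^L)$. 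As $u$ was arbitrary, $\tfrac{1}{n_k}\rho_k\, d\mu\to\mu_{eq}(K,h^L)$ weakly.
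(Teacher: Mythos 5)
This theorem is not proved in the paper: it is imported verbatim as Theorem B of \cite{BerBoucNys}, so there is no internal argument to compare yours against. Your sketch is, in substance, the actual proof from that reference: concavity of the functionals $t \mapsto \mathcal{L}_k(tu)$ (via the Gram-determinant-as-integral identity and H\"older), the identification of their derivative at $t=0$ with $\frac{1}{n_k}\int u\, \rho_k\, d\mu$, the convergence $\mathcal{L}_k \to \frac{1}{\mathrm{vol}(L)}\,\mathcal{E}_{\mathrm{eq}}$ (where the Bernstein--Markov hypothesis enters, to pass between $L^2$ and $L^\infty$ ball volumes), the differentiability of the energy at equilibrium with derivative the non-pluripolar Monge--Amp\`ere measure of the envelope, and finally the standard fact that derivatives of concave functions converge at a point of differentiability of the pointwise limit. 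The two deep inputs -- the convergence of $\mathcal{L}_k$ to the energy at equilibrium and the differentiability of that energy -- are themselves only cited in your write-up (and your parenthetical suggestion of an Okounkov-body/Chebyshev route is one of several known ways to get the former; Berman--Boucksom's original argument is different), but as a reconstruction of the cited theorem the outline is correct and matches its source.
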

	\par 
	Remark that when $\mu$ is a volume form on $X$, $L$ is an ample line bundle and $h^L$ is a smooth metric with a strictly positive curvature, much more precise asymptotic results on the Bergman kernel can be obtained.  
	In this setting, it is known that $|B_k(x, x)|$ admits a full asymptotic expansion in powers of $k$, and this expansion depends smoothly on all relevant parameters -- namely, the point $x \in X$, the metric on the manifold, the volume form, the complex structure, and so on.  
	This result was established in various degrees of generality by Tian \cite{TianBerg}, Catlin \cite{Caltin}, Bouche \cite{Bouche}, Zelditch \cite{ZeldBerg}, Dai-Liu-Ma \cite{DaiLiuMa}, Ma-Marinescu \cite{MaHol}, and Ma-Zhang \cite{MaZhangSuperconn}.
	\par 
	Note, however, that such a strong pointwise asymptotic expansion is highly sensitive to the underlying assumptions.  
	For instance, already in the case of a Riemann surface endowed with an ample line bundle with semi-positive and not strictly positive curvature, the nature of the expansion changes significantly and depends on the order of vanishing of the curvature, as demonstrated by Marinescu-Savale \cite{MarinSaval}.
	\par 
	We also note that, under stronger assumptions, one can study the speed of convergence in Theorem \ref{thm_diag}, as was done by Dinh-Ma-Nguyên \cite[Theorem 2.11]{DinhMaNgu}. 
	It would be interesting to investigate whether similar estimates can be obtained in the setting of Theorem \ref{thm_off_diag}.
	\par 
	Let us finally explain the relation between Theorems \ref{thm_off_diag} and \ref{thm_diag}.
	We denote by $\pi: X \times X \to X$ one of the two projections. 
	Then an easy verification (see (\ref{eq_bk_id})) shows
	\begin{equation}\label{eq_push_forw}
		\pi_* \mu_k^{\rm{Berg}}
		=
		\frac{1}{n_k} |B_k(x, x)| \cdot d \mu(x).
	\end{equation}
	Since pushforwards under continuous maps preserve weak convergence, we observe that Theorem \ref{thm_off_diag} constitutes a refinement of Theorem \ref{thm_diag}.
	\par 
	\begin{proof}[Proof of Theorem \ref{thm_nomass}]
		Assume for contradiction that there exists $\epsilon > 0$ such that, up to passing to a subsequence, for all $k \in \nat$ big enough, we have 
		\begin{equation}\label{eq_contr}
    		\int_{K_1 \times K_2} \mu_k^{\mathrm{Berg}} \geq \epsilon.
		\end{equation}
		From Theorem \ref{thm_diag}, (\ref{eq_push_forw}) and (\ref{eq_contr}), we then immediately obtain
		\begin{equation}\label{eq_non_empt}
			\int_{K_1} \mu_{\mathrm{eq}}(K, h^L)
			\geq
			\epsilon.
		\end{equation}
 		\par 
		By a version of Urysohn's lemma, we pick two continuous functions $f, g : X \to [0, 1]$, with non-intersecting support, and such that $f|_{K_1} = 1$, $g|_{K_2} = 1$.
		We consider the measure 
		\begin{equation}
			\mu' := \mu \cdot \exp(- g).
		\end{equation}
		It follows directly from the definition that $\mu'$ is a Bernstein-Markov measure for $(K, h^L)$, since $\mu$ is, and so Theorem \ref{thm_diag} holds for $\mu := \mu'$.
		Hence, if we denote by $B_k'(x, x)$, $x \in X$, $k \in \nat$, the Bergman kernel associated with $\mu'$ and $h^L$, by Theorem \ref{thm_diag}, we conclude that 
		\begin{equation}\label{eq_asympt_bk_diag}
		\begin{aligned}
			&
			\lim_{k \to \infty}
			\frac{1}{n_k}
			\int_{X} f(x) \cdot B_k'(x, x) \cdot d \mu'(x)
			=
			\int_{X} f \cdot d \mu_{\mathrm{eq}}(K, h^L), 
			\\
			&
			\lim_{k \to \infty}
			\frac{1}{n_k}
			\int_{X} f(x) \cdot B_k(x, x) \cdot d \mu(x)
			=
			\int_{X} f \cdot d \mu_{\mathrm{eq}}(K, h^L).
		\end{aligned}
		\end{equation}
		From (\ref{eq_non_empt}), we deduce that
		\begin{equation}\label{eq_non_empt2}
			\int_{X} f \cdot d \mu_{\mathrm{eq}}(K, h^L)
			\geq
			\epsilon.
		\end{equation}
		\par 
		We will now obtain a contradiction with our initial assumption (\ref{eq_contr}).
		We assume $k_0 \in \nat$ is big enough so that $n_k > 0$ for any $k \geq k_0$.
		We denote by $\Sigma$ the union of base loci of $L^{k_0}, L^{k_0 + 1}, \ldots, L^{2 k_0}$.
		By definition of the base-loci and the fact that the space of holomorphic sections of tensor powers of $L$ form an algebra under the multiplication, for any $x \in X \setminus \Sigma$ and $k \geq k_0$, there is $s \in H^0(X, L^{\otimes k})$ so that $s(x) \neq 0$.
		Remark also that $\Sigma$ is an analytic subset, and since $\mu$ is non-pluripolar, we have $\mu(\Sigma) = 0$.
		\par 
		Now, for any $x \in X \setminus \Sigma$, we denote by $s_{k, x} \in H^0(X, L^{\otimes k})$ (resp. $s'_{k, x} \in H^0(X, L^{\otimes k})$) the \textit{peak section} at $x$ with respect to the scalar product ${\textrm{Hilb}}_k(h^L, \mu)$ (resp. ${\textrm{Hilb}}_k(h^L, \mu')$).
		Recall that this means that $s_{k, x}$ (resp. $s'_{k, x}$) is of unit norm with respect to ${\textrm{Hilb}}_k(h^L, \mu)$ (resp. ${\textrm{Hilb}}_k(h^L, \mu')$) and orthogonal to the subspace $H^0(X, L^{\otimes k} \otimes \mathcal{J}_x)$ of holomorphic sections of $L^{\otimes k}$ vanishing at $x$.
		\par 
		Directly from the definition of the Bergman kernel and the fact that it doesn't depend on the choice of an orthonormal basis, we deduce that the Bergman kernels $B_k(x, y)$ (resp. $B'_k(x, y)$), $x \in X \setminus \Sigma, y \in X$, associated with ${\textrm{Hilb}}_k(h^L, \mu)$ (resp. ${\textrm{Hilb}}_k(h^L, \mu')$) verify
		\begin{equation}\label{eq_bk_id}
			B_k(x, y) = s_{k, x}(x) \cdot s_{k, x}(y)^*, \qquad B'_k(x, y) = s'_{k, x}(x) \cdot s'_{k, x}(y)^*.
		\end{equation}
		Immediately from the definition of peak sections, we deduce the following bound
		\begin{equation}\label{eq_peak_char}
			\frac{|s'_{k, x}(x)|^2_{(h^L)^k}}{\int_X |s'_{k, x}(y)|^2_{(h^L)^k} d \mu'(y)}
			\geq
			\frac{|s_{k, x}(x)|^2_{(h^L)^k}}{\int_X |s_{k, x}(y)|^2_{(h^L)^k} d \mu'(y)},
		\end{equation}
		which according to (\ref{eq_bk_id}) and our normalization yields
		\begin{equation}\label{eq_peak_char1}
			\int_X |s_{k, x}(y)|^2_{(h^L)^k} d \mu'(y)
			\geq
			\frac{|B_k(x, x)|}{|B'_k(x, x)|}.
		\end{equation}
		From (\ref{eq_bk_id}), we deduce
		\begin{multline}\label{eq_int_bk_peak}
			\int_{x \in X \setminus \Sigma} \int_{y \in X} f(x) \cdot |B_k(x, y)|^{2}_{(h^L)^k} \cdot d \mu'(x) \cdot d \mu'(y)
			\\
			=
			\int_{x \in X \setminus \Sigma} f(x) \cdot |B_k(x, x)| \cdot \Big( \int_{y \in X} |s_{k, x}(y)|^2_{(h^L)^k} \cdot d \mu'(y) \Big) \cdot d \mu'(x)
		\end{multline}
		By (\ref{eq_bk_id}) and (\ref{eq_peak_char1}), we further obtain 
		\begin{multline}\label{eq_int_bk_peak2}
			\int_{x \in X \setminus \Sigma} f(x) \cdot |B_k(x, x)| \cdot \Big( \int_{y \in X} |s_{k, x}(y)|^2_{(h^L)^k} d \mu'(y) \Big) \cdot d \mu'(x)
			\\
			\geq
			\int_{x \in X \setminus \Sigma} f(x) \cdot \frac{|B_k(x, x)|^2}{|B'_k(x, x)|} \cdot d \mu'(x).
		\end{multline}
		By Cauchy-Schwartz inequality and the fact that $\mu'$ coincides with $\mu$ over the support of $f$,
		\begin{multline}\label{eq_int_bk_peak3}
			\Big( \int_{x \in X \setminus \Sigma} f(x) \cdot \frac{|B_k(x, x)|^2}{|B'_k(x, x)|} \cdot d \mu'(x) \Big)
			\cdot
			\Big(
			\int_{x \in X \setminus \Sigma} f(x) \cdot |B'_k(x, x)| \cdot d \mu'(x)
			\Big)
			\\
			\geq
			\Big( \int_{x \in X \setminus \Sigma} f(x) \cdot |B_k(x, x)| \cdot d \mu(x) \Big)^2.
		\end{multline}
		A combination of (\ref{eq_asympt_bk_diag}), (\ref{eq_non_empt2}), (\ref{eq_int_bk_peak}), (\ref{eq_int_bk_peak2}), (\ref{eq_int_bk_peak3}) and $\mu(\Sigma) = 0$ yields
		\begin{equation}\label{eq_lower_bnd}
			\liminf_{k \to \infty}
			\frac{1}{n_k}
			\int_{x \in X} \int_{y \in X} f(x) \cdot |B_k(x, y)|^{2}_{(h^L)^k} \cdot d \mu'(x) \cdot d \mu'(y)
			\geq
			\int_{X} f \cdot d \mu_{\mathrm{eq}}(K, h^L).
		\end{equation}
		\par 
		Let us now estimate from above the integral on the left-hand side of (\ref{eq_lower_bnd}) to obtain a contradiction with (\ref{eq_contr}) -- a statement we have not yet used.
		Immediately from the definition of $f$ and $\mu'$,
		\begin{multline}\label{eq_upp_bnd1}
			\int_{x \in X} \int_{y \in X} f(x) \cdot |B_k(x, y)|^{2}_{(h^L)^k} \cdot d \mu'(x) \cdot d \mu'(y)
			\leq
			\\
			\int_{x \in X} \int_{y \in X} f(x) \cdot |B_k(x, y)|^{2}_{(h^L)^k} \cdot d \mu(x) \cdot d \mu(y)
			\\
			-
			(1 - e^{-1}) \cdot \int_{x \in K_1} \int_{y \in K_2} |B_k(x, y)|^{2}_{(h^L)^k} \cdot d \mu(x) \cdot d \mu(y).
		\end{multline}
		From (\ref{eq_push_forw}), we deduce
		\begin{equation}\label{eq_upp_bnd2}
			\int_{x \in X} \int_{y \in X} f(x) \cdot |B_k(x, y)|^{2}_{(h^L)^k} \cdot d \mu(x) \cdot d \mu(y)
			=
			\int_{x \in X} f(x) \cdot |B_k(x, x)| \cdot d \mu(x).
		\end{equation}
		From (\ref{eq_contr}), (\ref{eq_asympt_bk_diag}), (\ref{eq_upp_bnd1}) and  (\ref{eq_upp_bnd2}), we obtain
		\begin{multline}
			\limsup_{k \to \infty}
			\frac{1}{n_k}
			\int_{x \in X} \int_{y \in X} f(x) \cdot |B_k(x, y)|^{2}_{(h^L)^k} \cdot d \mu'(x) \cdot d \mu'(y)
			\\
			\leq
			\int_{X} f \cdot d \mu_{\mathrm{eq}}(K, h^L) - (1 - e^{-1}) \epsilon,
		\end{multline}
		clearly contradicting (\ref{eq_lower_bnd}), and, hence, our initial assumption (\ref{eq_contr}).
		This finishes our proof.
	\end{proof}
	
	\begin{proof}[Proof of Theorem \ref{thm_off_diag}]
		Let us fix $f \in \ccal^0(X \times X)$.
		We would like to verify that $\int f(x, y) \cdot d \mu_k^{\rm{Berg}}(x, y) \to \int f(x, x) \cdot d \mu_{eq}(K, h^L)$, as $k \to \infty$.
		We define $g \in \ccal^0(X \times X)$ as $g(x, y) := f(x, x)$.
		Then directly from Theorem \ref{thm_diag} and (\ref{eq_push_forw}), we obtain $\int g(x, y) \cdot d \mu_k^{\rm{Berg}}(x, y) \to \int f(x, x) \cdot d \mu_{eq}(K, h^L)$, as $k \to \infty$.
		By considering the difference $f - g$, we see that it suffices to show that $\int h(x, y) \cdot d \mu_k^{\rm{Berg}}(x, y) \to 0$, as $k \to \infty$, for continuous $h$ vanishing on the diagonal.
		\par 
		According to Theorem \ref{thm_nomass}, the above holds for $h$ lying in the space of the functions $\mathcal{V}$ spanned by $a(x) \cdot b(y)$ where $a, b \in \ccal^0(X)$ have non-intersecting support.
		Hence it also holds for the functions from the uniform closure $\overline{\mathcal{V}}$ of $\mathcal{V}$.
		The proof of Theorem \ref{thm_off_diag} will be complete once we establish that $\overline{\mathcal{V}}$ coincides with the space of functions vanishing on the diagonal.
		\par 
		To establish this, note first that it is immediate that every function from $\overline{\mathcal{V}}$ vanishes on the diagonal.
		Also any function vanishing along the diagonal can be uniformly approximated by functions vanishing in a neighborhood of the diagonal.
		It is hence enough to show that a continuous function $h$ vanishing in a neighborhood of the diagonal lies in $\overline{\mathcal{V}}$.
		To see this, consider a partition of unity $\rho_i$, $i \in I$, subordinate to a sufficiently small mesh.
		Then from the uniform continuity of $h$, one sees that the functions $\sum_{i, j \in I} h(x_{i, j}) \rho_i(x) \rho_j(y)$, where $x_{i, j} \in {\rm{supp}} (\rho_i) \times {\rm{supp}} (\rho_j)$ are chosen in an arbitrary way, approximate uniformly the function $h$ if the size of the mesh is small enough, and -- again if the mesh is small enough -- these approximations lie in $\mathcal{V}$ by our assumption on the vanishing of $h$ in a neighborhood of the diagonal.
	\end{proof}
	
	\section{Algebraic and spectral aspects of Toeplitz operators}\label{sect_toepl}
	The main goal of this section is to study the space of Toeplitz operators and to establish Theorems \ref{thm_alg} and \ref{thm_distr}.
	This will be achieved as a consequence of the following result, the proof of which is based on Theorem \ref{thm_off_diag}.
	\begin{thm}\label{thm_prod_thm}
		For any $f, g \in \ccal^0(X)$, $\{ T_k(f) \circ T_k(g), k \in \nat \}$ forms a Toeplitz operator with symbol $f \cdot g$.
	\end{thm}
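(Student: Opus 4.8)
The plan is to verify the two requirements in Definition \ref{defn_toepl_sch} for the sequence $\{T_k(f)\circ T_k(g)\}$: the uniform operator-norm bound, and the asymptotic proximity in some $p$-Schatten norm to $T_k(fg)$. The operator norm bound is immediate since $\|T_k(f)\|\le \|f\|_{L^\infty(K)}$ (the Bergman projection is a contraction and $M_k(f)$ has norm $\le\|f\|_{L^\infty(K)}$), hence $\|T_k(f)\circ T_k(g)\|\le \|f\|_{L^\infty(K)}\|g\|_{L^\infty(K)}$. The heart of the matter is therefore to show
\begin{equation*}
	\lim_{k\to\infty}\big\|T_k(f)\circ T_k(g)-T_k(fg)\big\|_2=0,
\end{equation*}
after which Remark b) (that a single $p$ suffices, via the standard inequalities (\ref{eq_schatted_bnds})) promotes this to all $p\in[1,+\infty[$. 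I will work with $p=2$ because the Hilbert-Schmidt norm is computed by an honest integral against the Bergman kernel, which is exactly where Theorem \ref{thm_off_diag} enters.

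The key computation is to express $\|T_k(f)\circ T_k(g)-T_k(fg)\|_2^2$ as an integral over $X\times X$ of the squared Bergman kernel. Writing everything through an orthonormal basis $s_i$ of $(H^0(X,L^{\otimes k}),\mathrm{Hilb}_k(h^L,\mu))$, one has $\langle T_k(f)T_k(g)s_i,s_j\rangle=\int_X\int_X f(x)g(y)\,\langle s_i(y),B_k(y,x)\rangle\cdots$ — more precisely, unwinding the definition of $B_k$ as a reproducing kernel, the matrix entries of $T_k(f)\circ T_k(g)$ are
\begin{equation*}
	\langle T_k(f)\circ T_k(g)\, s_a, s_b\rangle_{\mathrm{Hilb}_k}
	=\int_{X\times X} f(x)\,g(y)\,\langle s_a(y),\cdot\rangle\,\langle \cdot, s_b(x)\rangle\, B_k(x,y)\cdot d\mu(x)\,d\mu(y),
\end{equation*}
while the entries of $T_k(fg)$ are the same integral restricted to the diagonal $x=y$. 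Taking the difference, squaring, and summing over $a,b$, the orthonormality collapses the sums and one arrives at an expression of the shape
\begin{equation*}
	\frac{1}{n_k}\sum_{a,b}\big|\langle(T_k(f)T_k(g)-T_k(fg))s_a,s_b\rangle\big|^2
	=\int_{X\times X}\Phi_{f,g}(x,y)\cdot|B_k(x,y)|^2_{(h^L)^k}\,d\mu(x)\,d\mu(y)+o(1),
\end{equation*}
where $\Phi_{f,g}$ is a fixed continuous function on $X\times X$ built from $f$ and $g$ that \emph{vanishes on the diagonal} — schematically $\Phi_{f,g}(x,y)=|f(x)-f(y)|^2|g(y)|^2$ or a close variant after bookkeeping. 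One then recognizes the right-hand side, up to the normalization, as $\int \Phi_{f,g}\, d\mu_k^{\mathrm{Berg}}$ plus controlled error; since $\Phi_{f,g}$ is continuous and vanishes on the diagonal, Theorem \ref{thm_off_diag} forces $\int \Phi_{f,g}\,d\mu_k^{\mathrm{Berg}}\to \int \Phi_{f,g}(x,x)\,d\mu_{eq}(K,h^L)(x)=0$.

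The main obstacle I anticipate is the bookkeeping in the first step: the operator $T_k(g)$ does not map into $H^0$ before $B_k$ is reapplied, so one must be careful to insert the Bergman projection correctly and to track the middle factor $B_k(x,y)$ arising from $B_k\circ M_k(g)$ versus the diagonal $B_k(x,x)$ in $T_k(fg)$; the cross terms must be arranged so that what remains is genuinely an integral of a diagonal-vanishing continuous function against $|B_k|^2$. A secondary technical point is justifying the $o(1)$ errors: these come from terms like $\int f(x)g(x)|B_k(x,x)|\,d\mu(x)$ compared across $T_k(fg)$ and the "diagonal part" of the product, which are handled by Theorem \ref{thm_diag} (or just absorbed since the same diagonal integral appears on both sides and cancels). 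Once the identity reducing $\|T_k(f)T_k(g)-T_k(fg)\|_2^2$ to $\int\Phi_{f,g}\,d\mu_k^{\mathrm{Berg}}+o(1)$ is in hand, the conclusion is immediate from Theorem \ref{thm_off_diag}, and this establishes that $\{T_k(f)\circ T_k(g)\}$ is a Toeplitz operator with symbol $f\cdot g$.
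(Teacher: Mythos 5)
Your high-level strategy matches the paper's: establish the uniform operator-norm bound (your argument is equivalent to Lemma~\ref{lem_spec_radius}), reduce via (\ref{eq_schatted_bnds}) to the $2$-Schatten norm, and invoke Theorem~\ref{thm_off_diag} for a continuous function vanishing on the diagonal. However, the central computation as you sketch it does not go through. Writing $D := T_k(f)\circ T_k(g) - T_k(f\cdot g)$, the quantity $\|D\|_2^2 = \tfrac{1}{n_k}{\rm Tr}[D^*D]$ expands into traces of products of up to four Toeplitz operators; these are integrals over $X^4$ involving products of several Bergman kernels, and they do not collapse to an expression of the form $\tfrac{1}{n_k}\int_{X\times X}\Phi(x,y)\,|B_k(x,y)|^2\,d\mu(x)\,d\mu(y) + o(1)$ with $\Phi$ continuous and vanishing on the diagonal. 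So the ``bookkeeping'' obstacle you flag is not cosmetic: the identity you are aiming for is false, and your schematic $\Phi_{f,g}$ is not what emerges.

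The missing idea is to \emph{bound} rather than compute $\|D\|_2$. Using the reproducing identity $B_k\circ B_k = B_k$, one factors $D = B_k\circ S_k\circ B_k$, where $S_k$ is the integral operator with Schwartz kernel $S_k(x,y) = f(x)\big(g(x)-g(y)\big)B_k(x,y)$. Since $B_k$ is an orthogonal projection, (\ref{eq_schatted_bnds}) gives $\|D\|_2 \le \|S_k\|_2$, and unlike $\|D\|_2$, the Hilbert--Schmidt norm of $S_k$ \emph{is} read off from the kernel:
\begin{equation*}
	\|S_k\|_2^2 \;=\; \frac{1}{n_k}\int_{X\times X} |f(x)|^2\,|g(x)-g(y)|^2\,|B_k(x,y)|^2_{(h^L)^k}\,d\mu(x)\,d\mu(y).
\end{equation*}
Now $(x,y)\mapsto |f(x)|^2\,|g(x)-g(y)|^2$ is continuous and vanishes on the diagonal, so Theorem~\ref{thm_off_diag} forces the right-hand side to $0$. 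It is this one-sided inequality $\|D\|_2 \le \|S_k\|_2$ — not an exact reduction of $\|D\|_2^2$ to a bilinear integral — that closes the argument; once you insert this factorization your outline becomes the paper's proof.
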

	We shall use the following result concerning the spectral radius.
	\begin{sloppypar}
	\begin{lem}\label{lem_spec_radius}
		For any $f \in \ccal^0(X)$, the minimal and the maximal eigenvalues $\lambda_{\min}(T_k(f))$, $\lambda_{\max}(T_k(f))$ of $T_k(f)$ satisfy $\lambda_{\min}(T_k(f)) \geq \inf_{x \in K} f(x)$ and $\lambda_{\max}(T_k(f)) \leq \sup_{x \in K} f(x)$.
	\end{lem}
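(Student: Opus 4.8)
The plan is to reduce the statement to the Rayleigh quotient characterization of the extreme eigenvalues, after identifying the quadratic form of $T_k(f)$ explicitly.

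First I would record that for all $s_1, s_2 \in H^0(X, L^{\otimes k})$ one has
\[
	\langle T_k(f) s_1, s_2 \rangle_{{\textrm{Hilb}}_k(h^L, \mu)} = \int_X f(x) \cdot \langle s_1(x), s_2(x) \rangle_{(h^L)^k} \cdot d\mu(x).
\]
Indeed, since $B_k$ is the orthogonal projection onto $H^0(X, L^{\otimes k})$ with respect to (\ref{eq_defn_l2}) and $s_2$ already lies in that subspace, $\langle B_k M_k(f) s_1, s_2 \rangle = \langle M_k(f) s_1, s_2 \rangle$, and the latter equals the displayed integral by the very definition (\ref{eq_defn_l2}) of ${\textrm{Hilb}}_k(h^L, \mu)$. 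In particular, for real-valued $f$ the right-hand side is Hermitian-symmetric in $(s_1, s_2)$, so $T_k(f)$ is self-adjoint on $(H^0(X, L^{\otimes k}), {\textrm{Hilb}}_k(h^L, \mu))$ and all its eigenvalues are real, which is what makes the statement meaningful.

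Next, taking $s_1 = s_2 = s$ with $\| s \|_{{\textrm{Hilb}}_k(h^L, \mu)} = 1$, the above gives $\langle T_k(f) s, s \rangle = \int_X f(x) \cdot |s(x)|^2_{(h^L)^k} \cdot d\mu(x)$. Since $\mu$ is supported in $K$ and $\int_X |s(x)|^2_{(h^L)^k} \, d\mu(x) = \| s \|^2 = 1$, the measure $|s(x)|^2_{(h^L)^k} \, d\mu(x)$ is a probability measure carried by $K$; hence $\inf_{x \in K} f(x) \leq \langle T_k(f) s, s \rangle \leq \sup_{x \in K} f(x)$. By the min-max principle for self-adjoint operators on a finite-dimensional Hermitian space, $\lambda_{\min}(T_k(f)) = \min_{\| s \| = 1} \langle T_k(f) s, s \rangle$ and $\lambda_{\max}(T_k(f)) = \max_{\| s \| = 1} \langle T_k(f) s, s \rangle$, so the previous bounds yield the claim.

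There is essentially no obstacle here: the only two points deserving a word are the self-adjointness of $T_k(f)$ (immediate from the above identity together with $B_k$ being an orthogonal projection) and the observation that the normalization $\|s\| = 1$ forces the weight $|s|^2_{(h^L)^k}\, d\mu$ to be a probability measure supported in $K$, which is exactly why the bounds feature $\inf_K f$ and $\sup_K f$ rather than the extrema over all of $X$. If one wished to allow complex-valued $f$, the same computation shows that the numerical range of $T_k(f)$, and hence its spectrum, lies in the convex hull of $f(K)$.
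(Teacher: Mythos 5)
Your proof is correct and follows essentially the same route as the paper: identify the quadratic form $\langle T_k(f) s, s\rangle$ as $\int_X f \,|s|^2_{(h^L)^k}\,d\mu$, note that $\mu$ is supported in $K$, and apply the Rayleigh quotient characterization of the extreme eigenvalues. The only superficial difference is that the paper handles the minimum by replacing $f$ with $-f$, while you treat both bounds symmetrically at once.
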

	\begin{proof}
		Replacing $f$ with $-f$, the problem reduces to analyzing $\lambda_{\max}(T_k(f))$.
		The bound then follows immediately from the min-max characterization of the eigenvalues and the bound $\langle T_k(f) s, s \rangle_{{\textrm{Hilb}}_k(h^L, \mu)} \leq \sup_{x \in K} f(x) \cdot \langle s, s \rangle_{{\textrm{Hilb}}_k(h^L, \mu)}$ for any $s \in H^0(X, L^{\otimes k})$, which follows immediately from the definition of the $L^2$-norm.
	\end{proof}
	\end{sloppypar}
	\begin{proof}[Proof of Theorem \ref{thm_prod_thm}]
		We remark that the uniform bound on the operator norm of $T_k(f) \circ T_k(g)$ is an immediate consequence of Lemma \ref{lem_spec_radius}.
		Classical properties of Schatten norms yield -- in the notations of Definition \ref{defn_toepl_sch} -- that for any $T, S \in {\enmr{H^0(X, L^{\otimes k})}}$, $k \in \nat$, we have
		\begin{equation}\label{eq_schatted_bnds}
			\| T \|_p \leq \| T \|_2^{\frac{1}{p}} \cdot  \| T \|^{\frac{p - 1}{p}}, \qquad \| S \circ T \|_p \leq \| S \| \cdot \| T \|_p.
		\end{equation}
		So it suffices to establish that 
		\begin{equation}\label{eq_sk_schw00}
			\lim_{k \to \infty}
			\| T_k(f) \circ T_k(g) - T_k(f \cdot g) \|_2 
			=
			0.
		\end{equation}
		Now, it is immediate to see using the reproducing property $B_k \circ B_k = B_k$ that we can write
		\begin{equation}\label{eq_sk_schw0}
			T_k(f) \circ T_k(g) - T_k(f \cdot g) 
			=
			B_k \circ S_k \circ B_k,
		\end{equation}
		where $B_k$ is the Bergman kernel as in Definition \ref{defn_toepl_sch}, and $S_k \in {\enmr{H^0(X, L^{\otimes k})}}$, is defined as 
		\begin{equation}
			(S_k s)(x) := \int_{y \in X} S_k(x, y) \cdot s(y) \cdot d \mu(y), \quad \text{for any } s \in H^0(X, L^{\otimes k}),
		\end{equation}
		for $S_k(x, y) \in L^{\otimes k}_x \otimes (L^{\otimes k}_y)^*$ given by
		\begin{equation}\label{eq_sk_schw}
			S_k(x, y) = (f(x) g(x) - f(x) g(y)) B_k(x, y).
		\end{equation}
		From (\ref{eq_schatted_bnds}), we obtain
		\begin{equation}\label{eq_sk_schw1}
			\| B_k \circ S_k \circ B_k \|_2 \leq \| S_k \|_2.
		\end{equation}
		We now rely on the fact that the $2$-Schatten norm is the rescaled Hilbert-Schmidt norm, and the latter can be calculated using the $L^2$-norm of Schwartz kernel of the operator, which gives us 
		\begin{equation}\label{eq_sk_schw2}
			\| S_k \|_2^2
			=
			\frac{1}{n_k} \int_{X \times X} |S_k(x, y)|^{2}_{(h^L)^k} \cdot d \mu(x) \cdot d \mu(y).
		\end{equation}
		But from Theorem \ref{thm_off_diag} and (\ref{eq_sk_schw}), we deduce that 
		\begin{equation}
			\lim_{k \to \infty} \frac{1}{n_k} \int_{X \times X} |S_k(x, y)|^{2}_{(h^L)^k} \cdot d \mu(x) \cdot d \mu(y)
			=
			0,
		\end{equation}
		which along with (\ref{eq_sk_schw0}), (\ref{eq_sk_schw}), (\ref{eq_sk_schw1}) and (\ref{eq_sk_schw2}) finishes the proof of (\ref{eq_sk_schw00}).
	\end{proof}
	\begin{cor}\label{cor_prod_toepl}
		Consider two sequences of operators $T_k, T'_k \in {\enmr{H^0(X, L^{\otimes k})}}$, $k \in \nat$, forming Toeplitz operators with symbols $f \in \ccal^0(X)$ and $g \in \ccal^0(X)$.
		Then the sequence of operators $T_k \circ T'_k \in {\enmr{H^0(X, L^{\otimes k})}}$, $k \in \nat$, forms a Toeplitz operator with a symbol $f \cdot g \in \ccal^0(X)$.
	\end{cor}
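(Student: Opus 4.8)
The plan is to reduce the statement to Theorem \ref{thm_prod_thm} by a three-term triangle-inequality argument in Schatten norm. Fix $p \in [1, +\infty[$ and $\epsilon > 0$; by Remark b) after Definition \ref{defn_toepl_sch} it suffices to work with this single $p$. First I would record that $\{T_k \circ T_k'\}$ satisfies the required uniform operator-norm bound: indeed $\|T_k\| \leq C$ and $\|T_k'\| \leq C'$ by the definition of a Toeplitz operator, so $\|T_k \circ T_k'\| \leq C C'$. It remains to exhibit a symbol, and the natural candidate is $f \cdot g$, which is continuous on $X$ (hence, after restriction to $K$, a legitimate symbol); moreover by Proposition \ref{prop_symb_well_def} the precise representative does not matter, only its restriction to $K'$.

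The core estimate is the decomposition
\begin{equation}\label{eq_cor_split}
	T_k \circ T_k' - T_k(f \cdot g)
	=
	(T_k - T_k(f)) \circ T_k'
	+
	T_k(f) \circ (T_k' - T_k(g))
	+
	\big( T_k(f) \circ T_k(g) - T_k(f \cdot g) \big).
\end{equation}
I would bound each term in $\| \cdot \|_p$ separately. For the first term, the submultiplicativity $\| S \circ T \|_p \leq \|S\| \cdot \|T\|_p$ from (\ref{eq_schatted_bnds}) gives $\| (T_k - T_k(f)) \circ T_k' \|_p \leq \|T_k'\| \cdot \| T_k - T_k(f) \|_p \leq C' \cdot \| T_k - T_k(f) \|_p$, which tends to $0$ by the hypothesis that $\{T_k\}$ is a Toeplitz operator with symbol $f$. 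For the second term, the other half of (\ref{eq_schatted_bnds}), namely $\| S \circ T \|_p \leq \|S\| \cdot \|T\|_p$ applied with $S = T_k(f)$, together with the uniform bound $\|T_k(f)\| \leq \sup_{x \in K} |f(x)|$ from Lemma \ref{lem_spec_radius}, gives a bound by $\sup_K |f| \cdot \| T_k' - T_k(g) \|_p \to 0$ by the hypothesis on $\{T_k'\}$. The third term tends to $0$ by Theorem \ref{thm_prod_thm}. Combining, $\limsup_{k \to \infty} \| T_k \circ T_k' - T_k(f \cdot g) \|_p = 0$, so for $k$ large the quantity is below $\epsilon$, as required by Definition \ref{defn_toepl_sch}.

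There is essentially no real obstacle here: the statement is a formal consequence of Theorem \ref{thm_prod_thm} once one knows that the collection of Toeplitz operators is stable under the perturbations appearing in (\ref{eq_cor_split}), and that stability is exactly what the two inequalities in (\ref{eq_schatted_bnds}) provide. The only point requiring a moment of care is that the operator $\| \cdot \|$-bounds used to absorb the factors $T_k'$ and $T_k(f)$ must be \emph{uniform} in $k$ — this is guaranteed for $T_k'$ by the definition of a Toeplitz operator and for $T_k(f)$ by Lemma \ref{lem_spec_radius} — and that the exceptional index range where $n_k = 0$ is irrelevant since all assertions are asymptotic in $k$. This corollary, applied iteratively, is what upgrades Theorem \ref{thm_prod_thm} into the algebra statement of Theorem \ref{thm_alg}.
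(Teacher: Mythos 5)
Your proof is correct and follows the same route as the paper: the identical three-term decomposition, the same use of submultiplicativity from (\ref{eq_schatted_bnds}) to absorb the bounded factors, and Theorem \ref{thm_prod_thm} for the third term. The paper is terser (it does not spell out the appeal to Lemma \ref{lem_spec_radius} for $\|T_k(f)\|$), but there is no substantive difference.
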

	\begin{proof}
		The bound $\| T_k \circ T'_k \| \leq \| T_k \| \cdot \| T'_k \|$ implies that $\| T_k \circ T'_k \|$ is uniformly bounded.
		By the triangle inequality, we have
		\begin{multline}
			\big\| T_k \circ T'_k - T_k(f \cdot g) \big\|_p
			\leq
			\big\| (T_k - T_k(f)) \circ T'_k  \big\|_p
			\\
			+
			\big\| T_k(f) \circ (T'_k - T_k(g)) \big\|_p
			+
			\big\| T_k(f) \circ T_k(g) - T_k(f \cdot g) \big\|_p,
		\end{multline}
		which finishes the proof of Corollary \ref{cor_prod_toepl} by Theorem \ref{thm_prod_thm} and (\ref{eq_schatted_bnds}).
	\end{proof}		
	
	Let us now show that the space of Toeplitz operators is closed under continuous functional calculus.
	\begin{cor}\label{cor_func_calc}
		For any $h \in \ccal^0(\real)$ and a sequence $\{ T_k , k \in \nat \}$, forming a Toeplitz operator with symbol $f \in \ccal^0(X)$, the sequence $\{ h(T_k), k \in \nat \}$, forms a Toeplitz operator with symbol $h(f)$.
	\end{cor}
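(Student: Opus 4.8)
The plan is to reduce the statement to the already-established algebra structure (Corollary~\ref{cor_prod_toepl}) via a polynomial approximation argument, using the uniform spectral confinement provided by Lemma~\ref{lem_spec_radius}. First I would observe that since $\{T_k\}$ is a Toeplitz operator, its operator norms are uniformly bounded, say by some $C > 0$; moreover, by Lemma~\ref{lem_spec_radius} applied to a continuous extension of $f$, all eigenvalues of $T_k(f)$ lie in the compact interval $J := [\inf_K f, \sup_K f]$. Since $\|T_k - T_k(f)\|_p \to 0$ for every $p$, and in particular (taking $p$ large, or directly controlling the operator norm via a Toeplitz operator's defining bound) the spectrum of $T_k$ is eventually contained in a fixed compact neighborhood $J' \supset J$ of $J$ in $\real$: indeed one can run the Schatten estimates to see that the symmetrized operators $\tfrac12(T_k + T_k^*)$ — note $T_k$ need not be self-adjoint a priori, so one should either assume or arrange self-adjointness, or pass to $|T_k|$ — have spectrum confined near $J$.

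Next, fix $\epsilon > 0$. By the Weierstrass approximation theorem there is a polynomial $P$ with $\sup_{t \in J'} |h(t) - P(t)| < \epsilon$. The key algebraic input is that, by Corollary~\ref{cor_prod_toepl} together with the obvious fact that scalar multiples and sums of Toeplitz operators with given symbols are again Toeplitz operators (with the correspondingly scaled/summed symbols), the sequence $\{P(T_k), k \in \nat\}$ forms a Toeplitz operator with symbol $P(f)$. Then I would estimate, for each $k$,
\begin{equation*}
	\big\| h(T_k) - T_k(h(f)) \big\|_p
	\leq
	\big\| h(T_k) - P(T_k) \big\|_p
	+
	\big\| P(T_k) - T_k(P(f)) \big\|_p
	+
	\big\| T_k(P(f)) - T_k(h(f)) \big\|_p.
\end{equation*}
The middle term tends to $0$ as $k \to \infty$ because $\{P(T_k)\}$ is Toeplitz with symbol $P(f)$. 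The first term is bounded by $\sup_{t \in J'}|h(t) - P(t)| < \epsilon$ once $k$ is large enough that $\mathrm{Spec}(T_k) \subset J'$, using that the $p$-Schatten norm of $\phi(A)$ for a self-adjoint $A$ and scalar function $\phi$ is at most $\sup_{\mathrm{Spec}(A)}|\phi|$. The last term is controlled by the elementary estimate $\|T_k(u)\|_p \leq \|T_k(u)\| \leq \sup_K |u|$ (again Lemma~\ref{lem_spec_radius}), applied to $u = P(f) - h(f)$, giving a bound of $\sup_{t \in J}|P(t) - h(t)| < \epsilon$. Since $\epsilon$ was arbitrary, $\limsup_k \|h(T_k) - T_k(h(f))\|_p \leq 2\epsilon \to 0$, which together with the uniform operator-norm bound $\|h(T_k)\| \leq \sup_{J'}|h|$ shows that $\{h(T_k)\}$ is a Toeplitz operator with symbol $h(f)$.

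The main obstacle — really the only subtle point — is the non-self-adjointness of general $T_k$ in a Toeplitz sequence: functional calculus $h(T_k)$ is only unambiguously defined, and the norm estimate $\|h(T_k)\| \leq \sup_{\mathrm{Spec}}|h|$ only holds, for normal (here, self-adjoint) operators, and $T_k(f)$ for real-valued $f$ is self-adjoint but an arbitrary sequence converging to it in Schatten norm need not be. I would address this by noting that the statement is implicitly about self-adjoint Toeplitz operators (the symbols in this paper being real-valued continuous functions, and $T_k(f)$ being self-adjoint), so one restricts attention to self-adjoint representatives, or else interprets $h(T_k)$ as $h$ applied to the self-adjoint part; with that understood, the spectral mapping and the confinement of $\mathrm{Spec}(T_k)$ to a fixed compact set make all three terms above go through cleanly. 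The remaining ingredients — closure of Toeplitz sequences under linear combinations, and the passage from a single $p$ to all $p$ via~\eqref{eq_schatted_bnds} — are routine.
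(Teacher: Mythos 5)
Your proposal follows essentially the same route as the paper's proof: confine the spectrum to a fixed compact interval, apply Weierstrass to approximate $h$ there, and reduce to polynomials/monomials handled by Corollary~\ref{cor_prod_toepl}, then close up with the triangle inequality. Two small remarks. First, the claim that the Schatten estimates force $\mathrm{Spec}(T_k)$ into a fixed neighborhood $J'$ of $J$ is not correct — dimension-normalized Schatten convergence does not prevent a bounded rank-one perturbation from carrying an eigenvalue far from $J$ — but this is harmless, since the uniform operator-norm bound $\|T_k\|\leq C$ from Definition~\ref{defn_toepl_sch} already places the spectrum in $[-C,C]$, which is all the argument needs (and is what the paper uses); confinement precisely to $[\inf f,\sup f]$ via Lemma~\ref{lem_spec_radius} is only invoked for the special case $T_k=T_k(f)$. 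Second, your point about self-adjointness is well taken: the paper implicitly works with self-adjoint $T_k$ (it speaks of the spectrum lying in an interval and of real functional calculus), and making this explicit, as you do, is a genuine clarification rather than a change of method.
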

	\begin{proof}
		Remark that the uniform bound on the operator norm of $h(T_k)$ is immediate.
		Now, since $T_k$ has a uniformly bounded spectrum, let's say located inside of a compact interval $I$, the operator $h(T_k)$ depends solely on the restriction of $h$ on $I$.
		By Weierstrass approximation theorem, applied over $I$, we see that it suffices to establish the result for $h$ given by monomials.
		But this follows immediately from Corollary \ref{cor_prod_toepl} and finishes the proof.
		\par 
		Note also that from Lemma \ref{lem_spec_radius} and the above proof, we see that the analogue of Corollary \ref{cor_func_calc} holds if we replace $T_k$ by $T_k(f)$ and $h$ by a continuous function on $[\min f, \max f]$.
	\end{proof}
	We will need the following lemma, which -- although a special case of Theorem \ref{thm_distr} -- is actually used in its proof.
	\begin{lem}\label{lem_trace_toepl}
		For any $f \in \ccal^0(X)$, we have $\lim_{k \to \infty} \frac{1}{n_k} {\rm{Tr}}[T_k(f)] = \int f \cdot \mu_{\mathrm{eq}}(K, h^L)$.
	\end{lem}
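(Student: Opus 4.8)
The plan is to reduce the trace of $T_k(f)$ to an integral against the Bergman function on the diagonal, and then invoke Theorem \ref{thm_diag} directly. Concretely, fix an orthonormal basis $s_1, \ldots, s_{n_k}$ of $(H^0(X, L^{\otimes k}), {\textrm{Hilb}}_k(h^L, \mu))$. Since $T_k(f) = B_k \circ M_k(f)$, and since $B_k$ is the orthogonal projection onto $H^0(X, L^{\otimes k})$ — hence self-adjoint with $B_k s_i = s_i$ — we have
\begin{equation}
	\langle T_k(f) s_i, s_i \rangle_{{\textrm{Hilb}}_k(h^L, \mu)}
	=
	\langle M_k(f) s_i, B_k s_i \rangle_{{\textrm{Hilb}}_k(h^L, \mu)}
	=
	\langle M_k(f) s_i, s_i \rangle_{{\textrm{Hilb}}_k(h^L, \mu)}.
\end{equation}
By the definition (\ref{eq_defn_l2}) of the $L^2$-product, the right-hand side equals $\int_X f(x) \cdot |s_i(x)|^2_{(h^L)^k} \, d\mu(x)$.

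Summing over $i = 1, \ldots, n_k$ and using that $\sum_{i=1}^{n_k} |s_i(x)|^2_{(h^L)^k} = |B_k(x,x)|_{(h^L)^k}$ by the definition (\ref{eq_bergm_kern}) of the Bergman kernel (and its independence of the basis), we obtain
\begin{equation}
	\frac{1}{n_k} {\rm{Tr}}[T_k(f)]
	=
	\int_X f(x) \cdot \frac{1}{n_k} |B_k(x,x)| \, d\mu(x).
\end{equation}
In other words, $\frac{1}{n_k}{\rm{Tr}}[T_k(f)]$ is exactly the pairing of $f$ with the measure $\frac{1}{n_k}|B_k(x,x)| \, d\mu(x)$ appearing in Theorem \ref{thm_diag}.

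It then remains to pass to the limit. Since $X$ is compact, $f \in \ccal^0(X)$ is bounded and continuous, so the weak convergence of $\frac{1}{n_k}|B_k(x,x)| \, d\mu(x)$ to $\mu_{\mathrm{eq}}(K, h^L)$ provided by Theorem \ref{thm_diag} yields
\begin{equation}
	\lim_{k \to \infty} \frac{1}{n_k} {\rm{Tr}}[T_k(f)]
	=
	\int_X f \cdot \mu_{\mathrm{eq}}(K, h^L),
\end{equation}
which is the claim. There is essentially no obstacle here: the computation of the trace is routine bookkeeping, and the only substantial input is the already-quoted Theorem \ref{thm_diag} of Berman–Boucksom–Witt Nyström. (If one wishes to avoid even invoking continuity of $f$ against the limiting measure, one may alternatively approximate $f$ uniformly by smooth functions and use that the total masses $\frac{1}{n_k}\int_X |B_k(x,x)|\, d\mu = 1$ are bounded, but this is not needed.)
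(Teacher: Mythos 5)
Your proof is correct and follows essentially the same route as the paper: reduce $\frac{1}{n_k}\mathrm{Tr}[T_k(f)]$ to $\int_X f(x)\,\frac{1}{n_k}|B_k(x,x)|\,d\mu(x)$ and apply the Berman–Boucksom–Witt Nystr\"om convergence of Theorem \ref{thm_diag}; the paper phrases the trace computation via the Schwartz kernel on $X\times X$ combined with the push-forward identity (\ref{eq_push_forw}), while you compute it directly via an orthonormal basis, but the two are the same bookkeeping. (Note also that the paper's proof cites Theorem \ref{thm_distr}, which is evidently a typo for Theorem \ref{thm_diag} — the reference you use — since Theorem \ref{thm_distr} is itself derived from this lemma.)
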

	\begin{proof}
		Immediately from the fact that trace can be calculated through the integral of the Schwartz kernel, we deduce
		\begin{equation}
			{\rm{Tr}}[T_k(f)]
			=
			\int_{X \times X} f(x) \cdot |B_k(x, y)|^{2}_{(h^L)^k} \cdot d \mu(x) \cdot d \mu(y).
		\end{equation}
		The result now follows from Theorem \ref{thm_distr} and (\ref{eq_push_forw}).
	\end{proof}
	We can now finally establish that the symbol map is well-defined.
	\begin{prop}\label{prop_symb_well_def}
		Let $f, g \in \ccal^0(X)$.
		Then $\{ T_k(f) , k \in \nat \}$, forms a Toeplitz operator with symbol $g$, if and only if $f = g$ over $K'$, where $K'$ was defined in (\ref{eq_kprim}).
	\end{prop}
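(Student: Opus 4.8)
The plan is to prove both implications by reducing everything to the trace functional and Lemma~\ref{lem_trace_toepl}, together with the functional calculus of Corollary~\ref{cor_func_calc}. The key observation is that $T_k(f)$ is a self-adjoint operator whose quadratic form is controlled pointwise by $f$, so one can detect the restriction of $f$ to $K'$ by testing against functions of $T_k(f)$.

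For the ``if'' direction, suppose $f = g$ on $K'$. Consider the sequence $S_k := T_k(f) - T_k(g) = T_k(f - g)$, which is a Toeplitz operator with symbol $f - g$ (linearity of $f \mapsto T_k(f)$ is immediate from the definitions). I want to show $\|S_k\|_p \to 0$ for some $p$, which by the first estimate in (\ref{eq_schatted_bnds}) and the uniform operator-norm bound from Lemma~\ref{lem_spec_radius} reduces to showing $\|S_k\|_2 \to 0$, i.e. $\frac{1}{n_k}\mathrm{Tr}[S_k^2] \to 0$. Now $S_k^2 = (T_k(f-g))^2$, and by Corollary~\ref{cor_func_calc} (applied to the monomial $h(t) = t^2$, or directly to Theorem~\ref{thm_prod_thm}) the sequence $\{(T_k(f-g))^2\}$ is a Toeplitz operator with symbol $(f-g)^2$. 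Hence by Lemma~\ref{lem_trace_toepl},
\begin{equation}
\lim_{k\to\infty}\frac{1}{n_k}\mathrm{Tr}\big[(T_k(f-g))^2\big] = \int_X (f-g)^2 \, d\mu_{\mathrm{eq}}(K, h^L) = 0,
\end{equation}
where the last equality holds because $f - g$ vanishes on $K' = \mathrm{supp}(\mu_{\mathrm{eq}}(K, h^L))$. This gives $\|S_k\|_2 \to 0$, hence $\|T_k(f) - T_k(g)\|_p \to 0$ for every $p$, which is exactly the statement that $\{T_k(f)\}$ is a Toeplitz operator with symbol $g$.

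For the ``only if'' direction, suppose $\{T_k(f)\}$ is a Toeplitz operator with symbol $g$, so $\|T_k(f) - T_k(g)\|_p \to 0$, and hence (again via (\ref{eq_schatted_bnds}) and the uniform operator bounds) $\|T_k(f-g)\|_2 \to 0$, so $\frac{1}{n_k}\mathrm{Tr}[(T_k(f-g))^2] \to 0$. Running the computation above in reverse, Corollary~\ref{cor_func_calc} and Lemma~\ref{lem_trace_toepl} give $\int_X (f-g)^2 \, d\mu_{\mathrm{eq}}(K, h^L) = 0$, and since $(f-g)^2$ is continuous and nonnegative this forces $f - g \equiv 0$ on $\mathrm{supp}(\mu_{\mathrm{eq}}(K, h^L)) = K'$. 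This completes both directions.

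The only genuine subtlety — and the step I would be most careful about — is the bridge between the $p$-Schatten convergence in Definition~\ref{defn_toepl_sch} and the Hilbert–Schmidt ($p=2$) convergence used above: one must invoke the interpolation-type inequality in (\ref{eq_schatted_bnds}) in both directions, using the uniform operator-norm bound $\|T_k(f-g)\| \le \sup_K|f-g|$ from Lemma~\ref{lem_spec_radius} to pass between norms, and the remark following Definition~\ref{defn_toepl_sch} that a single $p$ suffices. Everything else is a direct application of results already established: Theorem~\ref{thm_prod_thm}/Corollary~\ref{cor_func_calc} to identify $(T_k(f-g))^2$ as a Toeplitz operator, and Lemma~\ref{lem_trace_toepl} to evaluate its normalized trace against $\mu_{\mathrm{eq}}(K, h^L)$, whose support is by definition $K'$.
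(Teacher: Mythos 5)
Your proof is correct and takes essentially the same approach as the paper: both reduce the statement to a trace computation via Lemma~\ref{lem_trace_toepl}, apply the functional calculus of Corollary~\ref{cor_func_calc} to convert $T_k(f)-T_k(g)$ into a nonnegative Toeplitz operator whose normalized trace is computable, and then compare with the characterization of the Toeplitz condition through a fixed Schatten norm using (\ref{eq_schatted_bnds}). The only cosmetic difference is that you apply Corollary~\ref{cor_func_calc} with $h(t)=t^2$ and work with the Hilbert--Schmidt norm, whereas the paper uses $h(t)=|t|$ and works with the $1$-Schatten norm; since $f-g$ is continuous and real, either choice detects vanishing on $K'=\mathrm{supp}\,\mu_{\mathrm{eq}}(K,h^L)$ equally well.

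One small implicit step (present in the paper's proof too, so not a deficiency): Lemma~\ref{lem_trace_toepl} is stated only for operators of the exact form $T_k(\varphi)$, while you apply it to the general Toeplitz operator $\{(T_k(f-g))^2\}$ with symbol $(f-g)^2$. The bridge is the observation that if $\{T_k\}$ is Toeplitz with symbol $\varphi$, then $\left|\tfrac{1}{n_k}\mathrm{Tr}[T_k]-\tfrac{1}{n_k}\mathrm{Tr}[T_k(\varphi)]\right|\le \|T_k-T_k(\varphi)\|_1\to 0$, so the normalized traces share the same limit $\int\varphi\,d\mu_{\mathrm{eq}}$. Worth making explicit, but it does not affect correctness.
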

	\begin{proof}
		According to Corollary \ref{cor_func_calc}, the sequence of operators $\{ |T_k(f) - T_k(g)| , k \in \nat \}$, forms a Toeplitz operator with symbol $|f - g|$.
		By Lemma \ref{lem_trace_toepl}, we conclude that
		\begin{equation}\label{eq_symb_wd_2}
			\lim_{k \to \infty} \frac{1}{n_k} {\rm{Tr}}[|T_k(f) - T_k(g)|] = \int |f - g| \cdot \mu_{\mathrm{eq}}(K, h^L).
		\end{equation}
		According to (\ref{eq_schatted_bnds}), $\{ T_k(f) , k \in \nat \}$, forms a Toeplitz operator with symbol $g$ if and only if
		\begin{equation}\label{eq_symb_wd_1}
			\lim_{k \to \infty} \frac{1}{n_k} {\rm{Tr}} \Big[ |T_k(f) - T_k(g)| \Big] = 0.
		\end{equation}
		A comparison between (\ref{eq_symb_wd_2}) and (\ref{eq_symb_wd_1}) shows -- by the continuity of $f$ and $g$ -- that $f = g$ on $K'$ if and only if $\{ T_k(f) , k \in \nat \}$, forms a Toeplitz operator with symbol $g$.
	\end{proof}
	\begin{proof}[Proof of Theorem \ref{thm_alg}]
		It follows immediately from Corollary \ref{cor_prod_toepl} and Proposition \ref{prop_symb_well_def}.
	\end{proof}
	\begin{proof}[Proof of Theorem \ref{thm_distr}]
		Remark first that 
		\begin{equation}
			\sum_{\lambda \in {\rm{Spec}} (T_k)} g(\lambda)
			=
			{\rm{Tr}}[g(T_k)].
		\end{equation}
		The result now follows immediately from Corollary \ref{cor_func_calc} and Lemma \ref{lem_trace_toepl}.
		\par 
		Note also that from Lemma \ref{lem_spec_radius} and the above proof, we see that the analogue of Theorem \ref{thm_distr} holds if we replace $T_k$ by $T_k(f)$ and $h$ by a continuous function on $[\min f, \max f]$.
	\end{proof}
	
	We finish this section with the following proposition concerning the spectral radius of the Toeplitz operators, which should be compared with Lemma \ref{lem_spec_radius}.
	\begin{prop}\label{prop_spec_radius}
		In the notations of Lemma \ref{lem_spec_radius}, we have
		\begin{equation}
			\limsup_{k \to \infty} \lambda_{\max}(T_k(f)) \geq \sup_{x \in K'} f(x),
			\qquad
			\liminf_{k \to \infty} \lambda_{\min}(T_k(f)) \leq \inf_{x \in K'} f(x),
		\end{equation}
		where $K'$ was defined in (\ref{eq_kprim}).
	\end{prop}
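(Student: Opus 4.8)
The plan is to derive both inequalities from the variant of Theorem~\ref{thm_distr} that applies directly to the operators $T_k(f)$ --- the one recorded in the note at the end of the proof of Theorem~\ref{thm_distr} --- together with the fact that $\mu_{\mathrm{eq}}(K, h^L)$ assigns positive mass to every open set meeting its support $K'$. Since $T_k(-f) = -T_k(f)$, we have $\liminf_{k \to \infty} \lambda_{\min}(T_k(f)) = - \limsup_{k \to \infty} \lambda_{\max}(T_k(-f))$, while $\inf_{x \in K'} f(x) = - \sup_{x \in K'} (-f)(x)$; hence it suffices to prove the first inequality, namely $\limsup_{k \to \infty} \lambda_{\max}(T_k(f)) \geq \sup_{x \in K'} f(x)$, for every $f \in \ccal^0(X)$.

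I would argue this by contradiction. Suppose $\limsup_{k \to \infty} \lambda_{\max}(T_k(f)) < s$, where $s := \sup_{x \in K'} f(x)$, and fix a real number $M$ with $\limsup_{k \to \infty} \lambda_{\max}(T_k(f)) < M < s$. As each $T_k(f)$ is self-adjoint, its spectrum lies in $]-\infty, M]$ for all $k$ large enough. Choose a continuous $g : \real \to [0, +\infty[$ with $g \equiv 0$ on $]-\infty, M]$ and $g > 0$ on $]M, +\infty[$ --- for instance $g(t) := \max(t - M, 0)$. Then $g(T_k(f)) = 0$ as an operator for all large $k$, so $\frac{1}{n_k} {\rm{Tr}}[g(T_k(f))] = 0$ for such $k$.

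On the other hand, the $T_k(f)$-version of Theorem~\ref{thm_distr} applied to the test function $g$ --- legitimate because Lemma~\ref{lem_spec_radius} confines ${\rm{Spec}}(T_k(f))$ to $[\inf_K f, \sup_K f]$, so the behaviour of $g$ outside this interval is irrelevant --- gives $\lim_{k \to \infty} \frac{1}{n_k} {\rm{Tr}}[g(T_k(f))] = \int_X g(f(x)) \, d\mu_{\mathrm{eq}}(K, h^L)(x)$. It remains to see that this integral is strictly positive, which contradicts the previous paragraph and completes the proof. Here the restriction to $K'$ (rather than to $K$) is essential: since $K'$ is compact and $f$ continuous, the supremum $s$ is attained at some $x_0 \in K'$; the set $U := \{ x \in X : f(x) > M \}$ is then an open neighbourhood of $x_0 \in K' = {\rm{supp}}(\mu_{\mathrm{eq}}(K, h^L))$, so $\mu_{\mathrm{eq}}(K, h^L)(U) > 0$ by the very definition of the support, and since $g \circ f > 0$ on $U$ we obtain $\int_X g(f) \, d\mu_{\mathrm{eq}}(K, h^L) \geq \int_U g(f) \, d\mu_{\mathrm{eq}}(K, h^L) > 0$.

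I do not anticipate any genuine obstacle: this estimate is the ``easy'' companion of Lemma~\ref{lem_spec_radius}, and all the analytic substance is already packaged in Theorem~\ref{thm_distr} (and ultimately in Theorem~\ref{thm_diag} of \cite{BerBoucNys}). The only points needing a little care are the appeal to the continuous functional calculus form of Theorem~\ref{thm_distr} for the operators $T_k(f)$, which is exactly the remark placed at the end of its proof, and the elementary observation that $\mu_{\mathrm{eq}}(K, h^L)$ charges the open set $\{ f > M \}$ --- which is precisely where one needs the maximiser $x_0$ to lie in $K'$, not merely in $K$.
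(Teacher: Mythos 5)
Your proof is correct and follows essentially the same strategy as the paper: reduce to the $\lambda_{\max}$ inequality by replacing $f$ with $-f$, pick a continuous test function $g \geq 0$ supported in $\{t > r - \epsilon\}$ (or $\{t > M\}$) with $g$ positive at the supremum value, and use Theorem~\ref{thm_distr} together with the fact that $K'$ is the support of $\mu_{\mathrm{eq}}(K, h^L)$ to see the limiting average is strictly positive, contradicting the assumed strict bound on the limsup of $\lambda_{\max}$. The only cosmetic difference is that the paper phrases this with an $\epsilon$-parameter rather than a fixed intermediate level $M$ and an explicit contradiction, and note also that $\{T_k(f)\}$ is already a Toeplitz operator with symbol $f$ by Lemma~\ref{lem_spec_radius}, so Theorem~\ref{thm_distr} applies directly without invoking the closing remark of its proof.
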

	\begin{sloppypar}
	\begin{proof}
		Replacing $f$ with $-f$, the problem reduces to analyzing $\lambda_{\max}(T_k(f))$.
		We let $r := \sup_{x \in K'} f(x)$, and for a given $\epsilon > 0$, we consider a continuous function $g: \real \to [0, 1]$, so that $g(r) = 1$ and $g(y) = 0$ for $y < r - \epsilon$.
		Then by our assumption we have $\int_X g(f(x)) d \mu_{eq}(K, h^L)(x) > 0$. 
		Proposition \ref{prop_spec_radius} follows from this and Theorem \ref{thm_distr}.
	\end{proof}
	\end{sloppypar}
	
	\section{Orthogonal polynomials and Toeplitz matrices}\label{sect_mat}
	The main goal of this section is to show how the general theory developed above specializes to the classical study of Toeplitz matrices and orthogonal polynomials. 
	\par 
	We consider a compact subset $K \subset \comp^n$ as a subset of $\mathbb{P}^n$ through the standard affine chart, and endow $L := \mathscr{O}(1)$ with the (singular at $\{ \infty \}$) metric $h^L$ so that for a global holomorphic section $\sigma$ of $\mathscr{O}(1)$ having a zero at $\{ \infty \}$, we have $|\sigma(x)|_{h^L} = 1$. 
	Then the Bernstein-Markov condition on a positive measure $\mu$ supported on $K$ associated with $(K, h^L)$ -- note that the restriction of $h^L$ to $K$ is continuous and so we can speak of the Bernstein-Markov condition on $(K, h^L)$ -- can be rewritten in the following way: for any $\epsilon > 0$, there is $C > 0$ so that for any polynomial $P$, we have
	\begin{equation}\label{eq_bm_clas0}
		\sup_{x \in K} |P(x)|^2
		\leq
		C
		\cdot
		\exp(\epsilon \deg(P))
		\cdot
		\int |P(x)|^2 d \mu(x).
	\end{equation}
	The psh envelope $h^L_K$ can be expressed as $h^L_K = h^L \cdot \exp(-\phi_K)$, where the function $\phi_K$ is given by
	\begin{equation}
		\phi_K(z) := \sup \{ v(z) : v \in \mathcal{L}, v \leq 0 \text{ on } K \},
	\end{equation}
	and $\mathcal{L}$ is the Lelong class of functions, consisting of psh functions on $\comp^n$ so that up to a constant, they are bounded above by $\frac{1}{2} \log (1 + |z|^2)$, $z \in \comp^n$, cf. \cite[Example 1.2]{GuedZeriGeomAnal}.
	\par 
	The main results of this paper yield asymptotic statements from the theory of orthogonal polynomials.  
	Rather than presenting them in their most general form, we prefer to illustrate two important special cases.
	\par 
	Let $\mathbb{S}^1$ be the unit circle and $f \in \ccal^0(\mathbb{S}^1)$, $f \neq 0$, be a fixed real function, which write as $f(\theta) = \sum_{i = - \infty}^{+ \infty} a_j \exp(\imun j \theta)$, where $\theta \in [0, 2 \pi[$, gives a standard parametrization of $\mathbb{S}^1$. 
	Then $a_i = \overline{a}_{-i}$ and not all $a_i$ vanish.
	\par 
	Now, we embed $\mathbb{S}^1$ in $X := \mathbb{P}^1$ as one of the great circles (for concreteness given by $\theta \mapsto [1: \exp(i \theta)] \in \mathbb{P}^1$, $\theta \in [0, 2 \pi[$, where $[1 : z] \in \mathbb{P}^1$, $z \in \comp$, is a standard affine chart) and denote by $\mu$ the Lebesgue measure on $\mathbb{S}^1$, viewed as a measure on $X$.
	Remark that as $\mathbb{S}^1$ is totally real, the measure $\mu$ is non-pluripolar, see \cite{SadullaevReal}, cf. \cite[Exercise 4.39.5]{GuedjZeriahBook}. 
	\par 
	We then take $L := \mathscr{O}(1)$ and endow it with the metric described above.
	The measure $\mu$ is Bernstein-Markov with respect to $(\mathbb{S}^1, h^L)$ by \cite[Corollary 1.8]{BerBoucNys}, \cite[Proposition 3.6]{BernsteinMarkovSurvey}.
	\par 
	Remark that in a standard basis of monomials of $H^0(X, L^{\otimes k})$ (given by $z^i$, $i = 0, \ldots, k$, in the already mentioned affine chart), the operator $T_k(f)$ writes as the following Toeplitz matrix 
	\begin{equation}
		T_k[f]
		:=
		\begin{bmatrix}
		a_0 & a_{-1} & a_{-2} & \cdots & a_{-k + 1} \\
		a_1 & a_0 & a_{-1} & \cdots & a_{-k+2} \\
		a_2 & a_1 & a_0 & \cdots & a_{-k+3} \\
		\vdots & \vdots & \vdots & \ddots & \vdots \\
		a_{k - 1} & a_{k-2} & a_{k-3} & \cdots & a_0
		\end{bmatrix}.
	\end{equation}
	\par 
	Due to $\mathbb{S}^1$-symmetry, the equilibrium measure associated with $(\mathbb{S}^1, h^L)$ is just the Lebesgue measure on the circle, cf. \cite[Exercise 9.8]{GuedjZeriahBook}.
	Immediately from this and Theorem \ref{thm_distr}, we recover the following classical statement.
	\begin{thm}[{Szegő first limit theorem \cite{SzegoFST}}]
		For any continuous $f : \mathbb{S}^1 \to \real$, $g: \real \to \real$,
		\begin{equation}
			\lim_{k \to \infty} \frac{1}{k} \sum_{\lambda \in {\rm{Spec}} (T_k[f])} g(\lambda)
			=
			\frac{1}{2 \pi}
			\int_0^{2 \pi} g(f(\theta)) d \theta.
		\end{equation}
	\end{thm}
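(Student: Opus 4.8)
The plan is to deduce the statement directly from Theorem \ref{thm_distr} applied to the geometric data introduced above; the only work is to check that the framework applies and to translate the objects.

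First I would record that the standing assumptions hold here: $X = \mathbb{P}^1$ is a compact complex manifold, $L = \mathscr{O}(1)$ is big (indeed ample), $\mu$ is a positive measure supported on the compact set $K = \mathbb{S}^1$, it is non-pluripolar since $\mathbb{S}^1$ is totally real, and it is Bernstein-Markov for $(\mathbb{S}^1, h^L)$ by the cited results; although $h^L$ is singular at $\infty$, this point does not lie on the great circle $\mathbb{S}^1$, so $h^L$ restricts there to a continuous metric and the theory applies. I would also note $n_k = \dim H^0(\mathbb{P}^1, \mathscr{O}(k)) = k + 1$, so that $\tfrac{1}{n_k}$ may be replaced by $\tfrac1k$ in the limit $k \to \infty$.

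Next I would identify the equilibrium measure. Since $X$, $L$, $h^L$ and $\mu$ are all invariant under the rotations $z \mapsto e^{\imun t} z$, so is $\mu_{\mathrm{eq}}(\mathbb{S}^1, h^L)$; being a rotation-invariant probability measure supported in $\mathbb{S}^1$, it equals the normalized Lebesgue measure $\tfrac{d\theta}{2\pi}$ (this is the cited exercise), and in particular $K' = \mathbb{S}^1$. Then I would check the matrix translation: the monomials $z^i \sigma^{\otimes k}$, $0 \le i \le k$, are orthogonal for ${\textrm{Hilb}}_k(h^L, \mu)$ by a one-line Fourier computation (using $|\sigma|_{h^L} \equiv 1$ on the affine chart), and in the associated orthonormal basis $e_i$ one gets $\langle T_k(f)\, e_i, e_j \rangle_{{\textrm{Hilb}}_k(h^L, \mu)} = \tfrac{1}{2\pi}\int_0^{2\pi} f(\theta)\, e^{\imun (j - i)\theta}\, d\theta = a_{j - i}$, so the matrix of $T_k(f)$ is exactly $T_k[f]$; since $f$ is real (equivalently $a_i = \overline{a}_{-i}$), this operator is self-adjoint and ${\rm{Spec}}(T_k(f)) = {\rm{Spec}}(T_k[f])$ is real, as needed for $g(\lambda)$ to make sense.

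Finally I would invoke Theorem \ref{thm_distr} for the Toeplitz operator $\{ T_k(f) \}$ with symbol $f \in \ccal^0(\mathbb{S}^1)$ and the given $g \in \ccal^0(\real)$, obtaining $\lim_k \tfrac{1}{n_k} \sum_{\lambda \in {\rm{Spec}}(T_k(f))} g(\lambda) = \int_X g(f)\, d\mu_{\mathrm{eq}}(\mathbb{S}^1, h^L) = \tfrac{1}{2\pi}\int_0^{2\pi} g(f(\theta))\, d\theta$; replacing $n_k$ by $k$ and passing to the matrix picture yields the theorem. There is no substantial obstacle here — the only points requiring care are the symmetry identification of the equilibrium measure and the bookkeeping confirming that the normalization of the $L^2$-product induced by $(h^L, \mu)$ turns $T_k(f)$ into the classical Toeplitz matrix on the nose, rather than up to a scalar or a transpose.
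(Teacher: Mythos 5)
Your proposal is correct and takes essentially the same route as the paper: specialize the geometric data to $X = \mathbb{P}^1$, $L = \mathscr{O}(1)$ with the Lebesgue measure on a great circle, identify the equilibrium measure as the normalized Lebesgue measure by rotational symmetry, check that $T_k(f)$ becomes the classical Toeplitz matrix $T_k[f]$ in the orthonormal monomial basis, and invoke Theorem \ref{thm_distr}. You supply a few routine verifications (that the singularity of $h^L$ at $\infty$ is harmless since $\infty \notin \mathbb{S}^1$, the explicit Fourier computation of the matrix entries, and self-adjointness of $T_k(f)$) that the paper leaves implicit, but nothing in the underlying argument differs.
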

	\begin{rem}
		The statement holds under much laxer assumptions on $f$, see \cite{NikolskiBook} for details.
	\end{rem}
	\par 
	Theorem \ref{thm_alg} then immediately recovers the following statement.
	\begin{thm}[{Grenander-Szeg{\"o} \cite[\S 7, 8]{GrenanSzego}}]
		For any continuous $f, g : \mathbb{S}^1 \to \real$, we have
		\begin{equation}
			\lim_{k \to \infty} \frac{1}{k} {\rm{Tr}} \Big[ \Big| T_k[f] \circ T_k[g] - T_k[f \cdot g] \Big| \Big]
			=
			0.
		\end{equation}
	\end{thm}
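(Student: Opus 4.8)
The plan is to read the statement off as the case $p = 1$ of Theorem \ref{thm_alg}, specialized to the circle setting above.

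First I would collect the bookkeeping. Here $X = \mathbb{P}^1$ and $L = \mathscr{O}(1)$, so $n_k = \dim H^0(X, L^{\otimes k}) = k + 1$, and, as already recorded above, in the monomial basis $z^0, \dots, z^k$ of $H^0(X, L^{\otimes k})$ the operator $T_k(f)$ is represented by the Toeplitz matrix $T_k[f]$, and similarly $T_k(g)$ by $T_k[g]$ and $T_k(f \cdot g)$ by $T_k[f \cdot g]$ (with $f \cdot g$ the pointwise product). Since $T_k(f) = B_k \circ M_k(f)$ is unchanged when $\mu$ is rescaled by a positive constant, we may normalize $\mu$ so that the monomial basis is orthonormal for ${\rm{Hilb}}_k(h^L, \mu)$; then the adjoint, the absolute value, and the trace, computed abstractly in $\enmr{H^0(X, L^{\otimes k})}$, agree with the matrix notions. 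In particular $T_k(f) \circ T_k(g) - T_k(f \cdot g)$ is represented by $T_k[f] \circ T_k[g] - T_k[f \cdot g]$, and the two corresponding values of ${\rm{Tr}}[\,|\cdot|\,]$ coincide.

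Then I would invoke Theorem \ref{thm_alg}: since $f, g \in \ccal^0(\mathbb{S}^1) = \ccal^0(K)$, formula (\ref{eq_toepl_comp_as}) applies for every $p \in [1, +\infty[$. Taking $p = 1$ and unwinding $\| A \|_1 = \frac{1}{n_k} {\rm{Tr}}[\,|A|\,]$ gives
\[
	\lim_{k \to \infty} \frac{1}{k + 1}\, {\rm{Tr}} \Big[ \Big| T_k[f] \circ T_k[g] - T_k[f \cdot g] \Big| \Big] = 0 .
\]
Finally, since $\tfrac1k = \tfrac{k+1}{k} \cdot \tfrac{1}{k+1}$ with $\tfrac{k+1}{k} \to 1$ and the bracketed quantity is nonnegative, the limit persists with $\tfrac{1}{k+1}$ replaced by $\tfrac1k$, which is the claimed identity.

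The argument has no real obstacle beyond Theorem \ref{thm_alg} itself; the only points deserving a moment's care are the identification $T_k(f) = T_k[f]$ in the monomial basis (together with the harmless rescaling of $\mu$), and the passage from the $n_k$-normalized Schatten norm of Definition \ref{defn_toepl_sch} to the $\tfrac1k$-normalization of the classical statement. One may equivalently phrase the conclusion as: $\{T_k[f] \circ T_k[g]\}_k$ is a Toeplitz operator with symbol $f \cdot g$ — legitimate here because $K' = {\rm{supp}}(\mu_{\mathrm{eq}}(\mathbb{S}^1, h^L)) = \mathbb{S}^1$, so the symbol map is the identity on $\ccal^0(\mathbb{S}^1)$.
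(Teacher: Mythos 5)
Your proof is correct and is exactly the route the paper takes: the result is stated as an immediate specialization of Theorem \ref{thm_alg} with $p=1$, after identifying $T_k(f)$ with the classical Toeplitz matrix $T_k[f]$ in the monomial basis. The two small points you flag (scale-invariance of $T_k(f)$ under rescaling $\mu$, and the harmless discrepancy between the $\tfrac{1}{n_k}$ normalization of the Schatten norm and the $\tfrac{1}{k}$ normalization of the classical statement) are correctly handled and are indeed the only bookkeeping the paper leaves implicit.
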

	\par 
	Let us now describe another application.
	For this, in the considered above affine chart of $\mathbb{P}^1$, we consider a subset $K := [-1, 1] \subset \real \subset \comp$ and let $\mu$ be the Lebesgue measure on $K$.
	\par 
	Then the measure $\mu$ is Bernstein-Markov with respect to $(K, h^L)$ by \cite[Corollary 1.8]{BerBoucNys}, \cite[Proposition 3.6]{BernsteinMarkovSurvey}.
	The classical calculation due to Lundin \cite{Lundin}, cf. \cite[p. 707]{BedfordTaylorEquil}, shows that the equilibrium measure is given by
	\begin{equation}
		\mu_{\mathrm{eq}}(K, h^L) = \frac{dx|_{[-1, 1]}}{\pi \cdot \sqrt{1 - x^2}}.
	\end{equation}
	\par 
	Now, let us denote by $L_n(z)$, $z \in \comp$, $n \in \nat$, the normalized Legendre polynomials.
	Recall that this means that $L_n(z)$ has degree $n$, and the following orthogonality relation holds
	\begin{equation}\label{eq_orth_poly}
		\int_{-1}^{1} L_n(x) L_m(x) dx = 1 \cdot \delta_{n m},
	\end{equation}
	where $\delta_{n m}$ is the Kronecker symbol.
	\par 
	Using the above trivialization of the bundle $L$, we see that for any $k \in \nat$, the polynomials $\{ L_n(z), n = 0, 1, \ldots, k - 1 \}$ generate $H^0(X, L^{\otimes k})$.
	Moreover, the condition (\ref{eq_orth_poly}) means exactly that these polynomials form an orthogonal basis with respect to ${\textrm{Hilb}}_k(h^L, \mu)$.
	\par 
	Let us now consider $f \in \ccal^0([-1, 1])$ and the associated Toeplitz operators $T_k(f)$.
	In the above orthonormal basis the operator $T_k(f)$ writes as the following Toeplitz matrix 
	\begin{equation}
		T_k\{ f \}
		:=
		\begin{bmatrix}
		a_{0 0} & a_{0 1} & \cdots & a_{0 k - 1} \\
		a_{1 0} & a_{1 1} & \cdots & a_{1 k - 1} \\
		\vdots & \vdots & \ddots & \vdots \\
		a_{k - 1 0} & a_{k-1 1} & \cdots & a_{k-1 k-1}
		\end{bmatrix},
	\end{equation}
	where $a_{i j} := \int_{-1}^{1} f(x) \cdot L_i(x) L_j(x) dx$, $i, j \in \nat$.
	Immediately from this and Theorem \ref{thm_distr}, we recover the following result.
	\begin{thm}[{Grenander-Szeg{\"o} \cite[p. 116]{GrenanSzego} and Nevai \cite[\S 5]{Nevai}}]
		For any continuous $f : [-1, 1] \to \real$, $g: \real \to \real$, we have
		\begin{equation}
			\lim_{k \to \infty} \frac{1}{k} \sum_{\lambda \in {\rm{Spec}} (T_k\{ f \})} g(\lambda)
			=
			\frac{1}{\pi}
			\int_{-1}^{1} \frac{g(f(x)) dx}{\sqrt{1 - x^2}}.
		\end{equation}
	\end{thm}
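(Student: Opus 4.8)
The plan is to read this off from Theorem \ref{thm_distr} applied to the present data $X = \mathbb{P}^1$, $L = \mathscr{O}(1)$ with the metric $h^L$ fixed above, $K = [-1,1]$ and $\mu = dx|_{[-1,1]}$. All the hypotheses are in place: $h^L$ is continuous on $K$, $\mu$ is a positive Borel measure supported on the compact set $K$, it is non-pluripolar since $[-1,1]$ is a totally real submanifold of $\comp$ (cf. \cite{SadullaevReal}), and it is Bernstein-Markov with respect to $(K, h^L)$ by \cite[Corollary 1.8]{BerBoucNys}, \cite[Proposition 3.6]{BernsteinMarkovSurvey}, as recalled above.

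First I would note that $\{T_k(f)\}_{k \in \nat}$ is itself a Toeplitz operator with symbol $f \in \ccal^0(K)$: taking $T_k := T_k(f)$ in Definition \ref{defn_toepl_sch}, condition (\ref{eq_toepl_schatten}) holds trivially, and the uniform bound on $\| T_k(f) \|$ is Lemma \ref{lem_spec_radius}. Next, since the normalized Legendre polynomials $\{ L_n(z) : n = 0, \ldots, k-1 \}$ form an orthonormal basis of $(H^0(X, L^{\otimes k}), {\textrm{Hilb}}_k(h^L, \mu))$ -- this is exactly the content of (\ref{eq_orth_poly}) together with the choice of trivialization of $L$, under which a polynomial $P$ of degree $< k$, regarded as a section of $L^{\otimes k}$, has $|P(x)|_{(h^L)^k} = |P(x)|$ on the affine chart -- the matrix of $T_k(f) = B_k \circ M_k(f)$ in this basis has $(i,j)$-entry equal to $\langle T_k(f) L_j, L_i \rangle_{{\textrm{Hilb}}_k(h^L, \mu)} = \langle f L_j, L_i \rangle_{{\textrm{Hilb}}_k(h^L, \mu)} = \int_{-1}^{1} f(x) L_i(x) L_j(x)\, dx = a_{ij}$, where I used that $B_k$ is the orthogonal projection, that $L_i, L_j \in H^0(X, L^{\otimes k})$, and that the $L_i$ are real-valued on $[-1,1]$. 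Hence this matrix is precisely $T_k\{f\}$, so ${\rm{Spec}}(T_k(f)) = {\rm{Spec}}(T_k\{f\})$, and, with the conventions of this section, $n_k = \dim H^0(X, L^{\otimes k}) = k$.

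It then remains to apply Theorem \ref{thm_distr}, which yields
\begin{equation*}
	\lim_{k \to \infty} \frac{1}{k} \sum_{\lambda \in {\rm{Spec}}(T_k\{f\})} g(\lambda)
	=
	\int_X g(f(x))\, d\mu_{\mathrm{eq}}(K, h^L)(x),
\end{equation*}
and to insert the explicit equilibrium measure $\mu_{\mathrm{eq}}(K, h^L) = \frac{dx|_{[-1,1]}}{\pi \sqrt{1 - x^2}}$ obtained above from Lundin's computation \cite{Lundin}; the right-hand side becomes $\frac{1}{\pi} \int_{-1}^{1} \frac{g(f(x))\, dx}{\sqrt{1 - x^2}}$, which is the claim. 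There is no genuine obstacle here: the entire analytic content sits in Theorem \ref{thm_distr} (hence in Theorems \ref{thm_off_diag} and \ref{thm_diag}), in the cited Bernstein-Markov property of $dx|_{[-1,1]}$ on $(K,h^L)$, and in Lundin's identification of the equilibrium measure. The only points requiring attention are the purely bookkeeping ones carried out just above -- the identification of the Legendre matrix model with $T_k(f)$, and the equality $n_k = k$ which makes the normalization $\frac{1}{n_k}$ of Theorem \ref{thm_distr} match the $\frac{1}{k}$ in the statement. One may additionally observe, via the remark closing the proof of Theorem \ref{thm_distr}, that it suffices to know $g$ on the compact interval $[\min_K f, \max_K f]$, which is automatic.
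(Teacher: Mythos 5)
Your proposal is correct and matches the paper's proof, which likewise reads the result off from Theorem \ref{thm_distr} once the matrix model $T_k\{f\}$ is identified with $T_k(f)$ in the orthonormal Legendre basis and Lundin's formula for $\mu_{\mathrm{eq}}(K,h^L)$ is inserted. (One small bookkeeping remark: $\dim H^0(\mathbb{P}^1,\mathscr{O}(k)) = k+1$ rather than $k$, so strictly $n_k = k+1$; this does not affect the limit since $\tfrac{1}{k}$ and $\tfrac{1}{n_k}$ are asymptotically interchangeable, and the paper itself works throughout with $k$ Legendre polynomials $L_0,\dots,L_{k-1}$ and the normalization $\tfrac{1}{k}$.)
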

	\par 
	Theorem \ref{thm_alg} then immediately gives us the following statement, which resembles some results from {Grenander-Szeg{\"o} \cite[\S 8.1]{GrenanSzego}}.
	\begin{thm}
		For any continuous $f, g : [-1, 1] \to \real$, we have
		\begin{equation}
			\lim_{k \to \infty} \frac{1}{k} {\rm{Tr}} \Big[ \Big| T_k\{ f \} \circ T_k\{ g \} - T_k \{ f \cdot g \} \Big| \Big]
			=
			0.
		\end{equation}
	\end{thm}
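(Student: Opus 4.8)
The plan is to read the statement off from Theorem~\ref{thm_alg}, once one identifies the matrix $T_k\{f\}$ with the abstract Toeplitz operator $T_k(f)$ of Definition~\ref{defn_toepl_sch} in the geometric model set up at the beginning of this section.

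First I would fix the dictionary. We work with $X = \mathbb{P}^1$, $L = \mathscr{O}(1)$ equipped with the metric $h^L$ described above; this $h^L$ is smooth on the affine chart $\comp \supset K = [-1,1]$, and since the Bergman projection $B_k$, the operators $T_k(\cdot)$ and the equilibrium measure depend on $h^L$ only through its restriction to $K = \operatorname{supp}(\mu)$ together with the ambient complex geometry, we may replace $h^L$ by a continuous metric on all of $X$ agreeing with it near $K$, so that the hypotheses of Theorem~\ref{thm_alg} are literally met. The measure $\mu$, Lebesgue measure on $[-1,1]$, is non-pluripolar and Bernstein--Markov for $(K, h^L)$, as recalled above. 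Using the trivialization of $L^{\otimes k}$ by the $k$-th power of $\sigma$, the relation (\ref{eq_orth_poly}) says precisely that the normalized Legendre polynomials $L_0, \dots, L_{k-1}$ form an orthonormal basis of $\big(H^0(X, L^{\otimes k}), {\textrm{Hilb}}_k(h^L, \mu)\big)$, and in this basis $T_k(f)$ is represented by the matrix $T_k\{f\}$ with entries $a_{ij} = \int_{-1}^{1} f\, L_i L_j\, dx$; likewise for $g$ and for $f\cdot g$. As matrix product corresponds to operator composition, the matrix $T_k\{f\}\circ T_k\{g\} - T_k\{f\cdot g\}$ represents the operator $T_k(f)\circ T_k(g) - T_k(f\cdot g)$, so ${\rm Tr}\big[|T_k\{f\}\circ T_k\{g\} - T_k\{f\cdot g\}|\big] = {\rm Tr}\big[|T_k(f)\circ T_k(g) - T_k(f\cdot g)|\big]$ is basis-independent.

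Next I would invoke Theorem~\ref{thm_alg}. Since $f, g \in \ccal^0([-1,1])$ extend to continuous functions on $X$ (Tietze-Urysohn; only the restriction to $K$ matters), equation (\ref{eq_toepl_comp_as}) with $p = 1$ gives $\| T_k(f)\circ T_k(g) - T_k(f\cdot g)\|_1 \to 0$ as $k \to \infty$. Unwinding the definition of the $1$-Schatten norm from Definition~\ref{defn_toepl_sch}, namely $\|A\|_1 = \tfrac{1}{n_k}{\rm Tr}[|A|]$ with $n_k = \dim H^0(X, L^{\otimes k})$, and using that here $n_k = k$, this is exactly $\tfrac{1}{k}{\rm Tr}\big[|T_k\{f\}\circ T_k\{g\} - T_k\{f\cdot g\}|\big] \to 0$, which is the claim. (With the alternative normalization $n_k = k+1$ the conclusion is unchanged, since $n_k/k \to 1$.)

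There is no real obstacle: all the analytic content -- that a product of Toeplitz operators is again Toeplitz with the product as symbol, with convergence in Schatten norm -- is already contained in Theorem~\ref{thm_alg}, which itself rests on the off-diagonal Bergman asymptotics of Theorem~\ref{thm_off_diag}. The only points needing a word of justification are the bookkeeping ones above: that the Legendre polynomials are ${\textrm{Hilb}}_k$-orthonormal (this is (\ref{eq_orth_poly})), that the singularity of $h^L$ at $\infty$ is irrelevant because $K$ is a compact set disjoint from it, and the harmless mismatch between the $\tfrac1k$ in the statement and the $\tfrac{1}{n_k}$ in the Schatten norm.
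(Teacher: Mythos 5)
Your proof is correct and follows essentially the same route as the paper: the paper deduces the statement directly from Theorem~\ref{thm_alg}, after having set up the dictionary between $T_k(f)$ and the matrix $T_k\{f\}$ via the Legendre orthonormal basis. You have simply made explicit the bookkeeping the paper leaves implicit -- that $T_k\{f\}$ is the matrix of $T_k(f)$ in the Legendre basis, that $p=1$ in (\ref{eq_toepl_comp_as}) gives the normalized trace, and that $n_k/k \to 1$ -- and you have also supplied a remark the paper omits, namely that the metric $h^L$ of Section~\ref{sect_mat} is singular at $\infty$ and so does not literally satisfy the standing continuity hypothesis of Theorem~\ref{thm_alg}, which you resolve by observing that all the relevant objects depend only on $h^L$ near $K$ and replacing $h^L$ by a continuous extension. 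That last point is a genuine (if minor) gap in the paper's exposition that your write-up correctly patches.
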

	\par 
	To conclude, we briefly recall that for symmetric convex subsets $K \subset \mathbb{R}^n$, an explicit formula for the equilibrium measure associated with $(K, h^L)$ was obtained by Bedford-Taylor \cite{BedfordTaylorEquil}. 
	According to Lundin's formulas for $h^L_K$ from \cite{Lundin}, see also \cite[p. 707]{BedfordTaylorEquil}, the pair $(K, h^L)$ is pluriregular. 
	It then follows from \cite[Proposition 1.13 and Theorem 1.14]{BerBoucNys} that the corresponding equilibrium measures satisfy the Bernstein-Markov property.  
	Thus, the analogues of the above result can be formulated for arbitrary symmetric convex subsets $K \subset \mathbb{R}^n$.
	We leave the precise formulation to the interested reader.

	\section{Bergman measures on singular spaces}\label{sect_sing}
	We fix from now on a compact complex reduced irreducible analytic space $X$, $\dim_{\comp} X = n$, and a big line bundle $L$ over $X$.
	We fix a continuous Hermitian metric $h^L$ on $L$ and a positive \textit{volume form} $\mu := dV$ on $X$ (which means it is a pull-back of a (strictly) positive $(n, n)$-differential form in each chart).
	For the background on the singular complex spaces, we recommend \cite[\S II.9]{DemCompl}
	We denote by $B_k(x, y)$ the associated Bergman kernel, defined as in (\ref{eq_bergm_kern}), and $\mu_k^{\rm{Berg}}$ as in (\ref{eq_mu_begmm}).
	\par 
	The asymptotics of $\frac{1}{k} \log |B_k(x, x)|$ has been investigated in a series of recent papers by Coman-Marinescu \cite{ComMar}, and by Coman-Ma-Marinescu \cite{ComanMaMarinGT}, \cite{ComanMaMarMoish}, see also Berman \cite[Theorem 1.5]{BermanEnvProj} and Bayraktar \cite[Proposition 2.9]{BayraktarEquidistr} for related results in this direction.  
	Our main focus in this section is on the non-logarithmic asymptotics, both on and off the diagonal.
	\par 
	In order to state our results, we define first the equilibrium measure associated with $(X, h^L)$.  
	To this end, let $\pi : \hat{X} \to X$ be a resolution of singularities.  
	According to \cite[Proposition 2.2.43]{LazarBookI}, the pullback line bundle $\pi^* L$ is big and
	\begin{equation}\label{eq_vol_resol}
    	\mathrm{vol}(\pi^* L) = \mathrm{vol}(L).
	\end{equation}
	We then define the equilibrium measure $\mu_{\mathrm{eq}}(X, h^L)$ on $X$ as
	\begin{equation}\label{eq_defn_eq_sing}
    	\mu_{\mathrm{eq}}(X, h^L) := \pi_* \mu_{\mathrm{eq}}(\hat{X}, \pi^* h^L).
	\end{equation}
	Note that, since any two resolutions of singularities can be dominated by a third one, the above definition is independent of the choice of resolution by \cite[Proposition 1.9]{BermanBouckBalls}.
	\par 
	For $k \in \nat$, we denote by $B_k(x, y)$ (resp. $\hat{B}_k(x, y)$) the Bergman kernel associated with $X$, $L$, $h^L$ and $\mu$ (resp. $\hat{X}$, $\pi^* L$, $\pi^* h^L$ and $\pi^* \mu$), defined as in (\ref{eq_bergm_kern}), and by $\mu_k^{\rm{Berg}}$ (resp. $\hat{\mu}_k^{\rm{Berg}}$) the measure on $X \times X$ (resp. $\hat{X} \times \hat{X}$), defined as in (\ref{eq_mu_begmm}).
	We now state the two main results of this section.
	\par 
	\begin{thm}\label{thm_sing0}
		The sequence of measures $\frac{1}{n_k} |B_k(x, x)| \cdot d \mu(x)$ on $X$ converges weakly, as $k \to \infty$, to the equilibrium measure $\mu_{eq}(X, h^L)$.
	\end{thm}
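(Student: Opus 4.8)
The plan is to reduce everything to the smooth case (Theorem \ref{thm_diag}) applied on the resolution $\hat{X}$, using the fact that $\pi: \hat{X} \to X$ is a biholomorphism outside a proper analytic subset, and that pullback induces an isometric isomorphism on spaces of sections. First I would observe that pullback along $\pi$ gives, for every $k$, a canonical identification $\pi^*: H^0(X, L^{\otimes k}) \to H^0(\hat{X}, (\pi^* L)^{\otimes k})$. This map is surjective because $\hat{X}$ has rational singularities resolved — more precisely, since $X$ is normal (being reduced irreducible with a resolution, after passing to the normalization if needed; actually one should invoke that holomorphic sections extend across the exceptional locus by normality, cf. \cite[Proposition 2.2.43]{LazarBookI} or Hartogs-type extension), every section of $(\pi^*L)^{\otimes k}$ descends. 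Moreover, $\pi^*$ is an isometry for the $L^2$-inner products $\textrm{Hilb}_k(h^L, \mu)$ and $\textrm{Hilb}_k(\pi^* h^L, \pi^* \mu)$, simply because the exceptional locus of $\pi$ has $\mu$-measure zero (it maps into a proper analytic subset of $X$, which is $\mu$-null since $\mu$ is a volume form) and likewise its preimage has $\pi^*\mu$-measure zero. Consequently $n_k = \dim H^0(X, L^{\otimes k}) = \dim H^0(\hat{X}, (\pi^* L)^{\otimes k})$, an orthonormal basis pulls back to an orthonormal basis, and therefore the Bergman kernels satisfy $B_k(x, y) = \hat{B}_k(\hat{x}, \hat{y})$ whenever $\pi(\hat{x}) = x$, $\pi(\hat{y}) = y$ and $\hat{x}, \hat{y}$ lie off the exceptional locus.

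Next I would transport the measure on the diagonal. From the pointwise identity above and $\pi_* (\pi^* \mu) = \mu$ together with the fact that $\mu$ charges no analytic subset, we get the identity of measures on $X$
\begin{equation*}
	\frac{1}{n_k} |B_k(x, x)| \cdot d\mu(x) = \pi_* \Big( \frac{1}{n_k} |\hat{B}_k(\hat{x}, \hat{x})| \cdot d(\pi^*\mu)(\hat{x}) \Big).
\end{equation*}
Now $\pi^* L$ is big on the compact complex manifold $\hat{X}$ by \cite[Proposition 2.2.43]{LazarBookI}, $\pi^* h^L$ is a continuous Hermitian metric, and $\pi^* \mu$ is a smooth volume form, hence in particular a non-pluripolar measure supported on all of $\hat{X}$; moreover a volume form on a compact complex manifold is Bernstein-Markov with respect to $(\hat X, \pi^* h^L)$ by \cite[Corollary 1.8]{BerBoucNys}. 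So Theorem \ref{thm_diag} applies on $\hat X$ with $K = \hat X$ and yields that $\frac{1}{n_k} |\hat{B}_k(\hat{x}, \hat{x})| \cdot d(\pi^*\mu)(\hat{x})$ converges weakly to $\mu_{\mathrm{eq}}(\hat{X}, \pi^* h^L)$. Applying the continuous map $\pi_*$, which preserves weak convergence, and using the definition \eqref{eq_defn_eq_sing} of $\mu_{\mathrm{eq}}(X, h^L) = \pi_* \mu_{\mathrm{eq}}(\hat X, \pi^* h^L)$, we conclude that $\frac{1}{n_k} |B_k(x,x)| \cdot d\mu(x)$ converges weakly to $\mu_{\mathrm{eq}}(X, h^L)$, which is the assertion.

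The main obstacle — the only genuinely non-formal point — is justifying that $\pi^*$ is an \emph{isomorphism} (not merely an injection) on global sections, i.e. that sections on $\hat X$ descend to $X$. This requires a normality/extension argument: one should either first replace $X$ by its normalization (the statement is insensitive to this since the normalization map is finite and biholomorphic off a proper analytic subset, hence $\mu$-null, so Bergman kernels and the equilibrium measure are unchanged), or directly invoke Riemann's extension theorem on the normal space $X$ to extend $\pi_*$ of a section across the image of the exceptional locus. Once surjectivity of $\pi^*$ is in hand, the matching of dimensions $n_k$ and of orthonormal bases is automatic from the isometry property, and the rest is the bookkeeping sketched above. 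A secondary point to check carefully is that the singular locus of $X$, and hence the non-biholomorphic locus of $\pi$, is a proper analytic subset and therefore Lebesgue-null for the volume form $\mu$ — this is standard, as $\dim_{\comp} \mathrm{Sing}(X) < n$.
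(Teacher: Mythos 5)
There is a genuine gap, and it comes from the hypotheses: Theorem \ref{thm_sing0} does \emph{not} assume $X$ normal (normality is only imposed in Theorem \ref{thm_sing}), yet your proof crucially uses that $\pi^*: H^0(X, L^{\otimes k}) \to H^0(\hat X, (\pi^*L)^{\otimes k})$ is an \emph{isomorphism} to match dimensions, transport orthonormal bases, and identify the two Bergman kernels exactly. This surjectivity fails for non-normal $X$: sections over $\hat X$ need not descend. You sense this and offer two repairs, but neither closes the gap. Invoking normality directly is not available. And the fallback ``replace $X$ by its normalization, the Bergman kernels and equilibrium measures are unchanged'' is false at any fixed $k$: the normalization $\nu: \tilde X \to X$ gives an isometric \emph{embedding} $H^0(X, L^{\otimes k}) \hookrightarrow H^0(\tilde X, \nu^*L^{\otimes k})$ that can have nonzero cokernel, so the two Bergman kernels genuinely differ. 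What is true, and what is really needed, is an \emph{asymptotic} comparison: $\mathrm{vol}(L) = \mathrm{vol}(\pi^*L)$ forces $\hat n_k - n_k = o(k^n)$, and since $|\hat B_k(x,x)| - |B_k(\pi(x),\pi(x))| \ge 0$ (it is the diagonal Bergman function of the orthogonal complement), the positive measure $\frac{1}{n_k}(|\hat B_k(x,x)| - |B_k(\pi(x),\pi(x))|)\, d(\pi^*\mu)(x)$ has total mass $\frac{\hat n_k - n_k}{n_k} \to 0$ and hence converges weakly to zero. This is precisely the paper's Lemma \ref{lem_comp_res_meas}, which replaces your exact identification by a weak-convergence substitute that works without normality. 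With that lemma in hand, your pushforward-of-Theorem-\ref{thm_diag} argument goes through unchanged.

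A secondary, fixable inaccuracy: $\pi^*\mu$ is \emph{not} a strictly positive volume form on $\hat X$ — it degenerates along the exceptional locus — so you cannot quote \cite[Corollary 1.8]{BerBoucNys} for it directly. One instead notes that $\pi^*\mu$ is mutually absolutely continuous with a genuine volume form on $\hat X$ (the density vanishes only on an analytic subset, which is Lebesgue-null) and that the Bernstein-Markov property with respect to psh weights depends only on the absolute continuity class, via the characterization of determining measures; this is what the paper does in Lemma \ref{lem_bm} and Remark \ref{rem_equiv_class}. With these two corrections your outline matches the paper's proof.
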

	\begin{thm}\label{thm_sing}
		If the space $X$ is normal, then the sequence of measures $\mu_k^{\rm{Berg}}$ on $X$ converges weakly, as $k \to \infty$, to $\Delta_* \mu_{eq}(X, h^L)$, where $\Delta: X \to X \times X$ is the diagonal embedding.
		Moreover, the analogues of Theorems \ref{thm_alg} and \ref{thm_distr} hold in this setting.
	\end{thm}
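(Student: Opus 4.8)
The plan is to bootstrap the singular case from the smooth case already established in Theorems \ref{thm_diag}, \ref{thm_off_diag}, \ref{thm_alg}, and \ref{thm_distr}, applied on the resolution $\hat{X}$. The key observation to set up is the comparison between the two Bergman kernels: pullback $\pi^*$ induces an isomorphism $H^0(X, L^{\otimes k}) \cong H^0(\hat{X}, \pi^* L^{\otimes k})$ (injectivity is clear since $\pi$ is surjective; surjectivity uses that $\pi_* \mathcal{O}_{\hat{X}} = \mathcal{O}_X$ when $X$ is normal, and for Theorem \ref{thm_sing0} one can first pass to the normalization of $X$, or invoke that sections extend across the exceptional locus by Riemann-type extension since $\mu$ is a volume form and $\pi$ is an isomorphism outside a measure-zero analytic set). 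Moreover this isomorphism is an isometry for the $\textrm{Hilb}_k$-products, because $\pi^* \mu$ agrees with $\mu$ off a $\mu$-null analytic set $\Sigma = {\rm Sing}(X)$ (and $\pi^{-1}(\Sigma)$ is $\pi^*\mu$-null). Hence an orthonormal basis of $(H^0(X, L^{\otimes k}), \textrm{Hilb}_k(h^L,\mu))$ pulls back to one on $\hat{X}$, which gives $\hat{B}_k(\hat{x}, \hat{y}) = B_k(\pi(\hat{x}), \pi(\hat{y}))$ as sections, compatibly with the metrics $\pi^* h^L$. Consequently $\hat{\mu}_k^{\rm Berg}$ is exactly the pullback of $\mu_k^{\rm Berg}$ under $\pi \times \pi$, and likewise $\frac{1}{n_k}|\hat{B}_k(\hat x,\hat x)| \cdot d(\pi^*\mu)(\hat x)$ is the pullback of $\frac{1}{n_k}|B_k(x,x)| \cdot d\mu(x)$.

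For Theorem \ref{thm_sing0}: since $\pi^* \mu$ is again a volume form on $\hat{X}$ (smooth, using that $\mu = dV$ is locally a positive $(n,n)$-form and $\pi$ is a modification), and volume forms are Bernstein-Markov by the results recalled in the introduction (\cite{BerBoucNys}, \cite{BernsteinMarkovSurvey}), Theorem \ref{thm_diag} applies on $\hat{X}$ to give weak convergence of $\frac{1}{n_k}|\hat B_k(\hat x,\hat x)|\cdot d(\pi^*\mu)(\hat x)$ to $\mu_{\rm eq}(\hat X, \pi^* h^L)$. Pushing forward by $\pi$, which is continuous and hence preserves weak convergence, and using the identity above together with the definition (\ref{eq_defn_eq_sing}), yields the claim. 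For Theorem \ref{thm_sing}, the same argument applied to Theorem \ref{thm_off_diag} on $\hat X$ gives weak convergence of $\hat\mu_k^{\rm Berg}$ to $\Delta_{\hat X *}\mu_{\rm eq}(\hat X,\pi^* h^L)$; pushing forward by $\pi\times\pi$ and noting $(\pi\times\pi)\circ\Delta_{\hat X} = \Delta_X \circ \pi$ gives $\mu_k^{\rm Berg} \to \Delta_{X *}\mu_{\rm eq}(X, h^L)$. For the analogues of Theorems \ref{thm_alg} and \ref{thm_distr}: the Toeplitz operators $T_k(f)$ on $H^0(X,L^{\otimes k})$ correspond under the isometry to the Toeplitz operators $T_k(\pi^* f)$ on $H^0(\hat X, \pi^* L^{\otimes k})$ (the multiplication operator and Bergman projection both transport correctly), so the algebra property, the composition asymptotics (\ref{eq_toepl_comp_as}), and the spectral equidistribution follow verbatim from the smooth case on $\hat X$, with $\mu_{\rm eq}(\hat X,\pi^* h^L)$ pushed to $\mu_{\rm eq}(X, h^L)$ in the limit.

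The main obstacle is the isometric isomorphism $H^0(X, L^{\otimes k}) \cong H^0(\hat X, \pi^* L^{\otimes k})$ — specifically surjectivity — in the precise generality stated: Theorem \ref{thm_sing0} claims no normality hypothesis while Theorem \ref{thm_sing} does. For Theorem \ref{thm_sing}, normality gives $\pi_*\mathcal{O}_{\hat X}=\mathcal{O}_X$ directly; a section $s$ of $\pi^* L^{\otimes k}$ descends since, locally trivializing $L$, $s$ is a holomorphic function on $\hat X$ hence (by normality and Hartogs/Riemann extension across the exceptional set) descends to $X$. For Theorem \ref{thm_sing0} without normality one should instead factor through the normalization $\nu: X^\nu \to X$: sections of $L^{\otimes k}$ pull back to $X^\nu$, the volume form pulls back to a volume form off a null set, and one applies the normal case, checking that $\mu_{\rm eq}$ is insensitive to $\nu$ by the same domination argument via \cite[Proposition 1.9]{BermanBouckBalls}. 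The remaining care is purely measure-theoretic: verifying that all the relevant analytic subsets (the singular locus, the exceptional divisor, base loci) are null for the respective volume forms, which is standard since these are volume forms and proper modifications are biholomorphisms over a Zariski-dense open set.
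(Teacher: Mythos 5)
Your overall strategy---resolve singularities via $\pi:\hat X\to X$, use normality and Zariski's main theorem to get an isometric isomorphism $H^0(X,L^{\otimes k})\cong H^0(\hat X,\pi^*L^{\otimes k})$, apply the smooth-manifold theorems on $\hat X$, and push forward---is precisely what the paper does. However, there is a genuine gap at the step where you apply Theorem \ref{thm_off_diag} (and Theorem \ref{thm_diag}) on $\hat X$: you assert that ``$\pi^*\mu$ is again a volume form on $\hat X$''. This is false. A proper modification contracts the exceptional locus, so the Jacobian of $\pi$ vanishes there; e.g.\ for the blow-up of a point in $\comp^2$, $\pi^*(dz\wedge d\bar z\wedge dw\wedge d\bar w)=|x|^2\,dx\wedge d\bar x\wedge dy\wedge d\bar y$. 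Hence $\pi^*\mu$ is a degenerate $(n,n)$-form vanishing along the exceptional divisor, and one cannot directly invoke the statement that volume forms are Bernstein-Markov. Without establishing that $\pi^*\mu$ is Bernstein-Markov for $(\hat X,\pi^*h^L)$, Theorem \ref{thm_off_diag} does not apply on $\hat X$ and the argument stalls.

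The paper fills this gap with Lemma \ref{lem_bm}. The idea is to work with the stronger Bernstein-Markov property \emph{with respect to psh weights}, which by Theorem \ref{thm_bbwn_determining} is equivalent to the measure being determining, a property that manifestly depends only on the absolute continuity class of the measure (Remark \ref{rem_equiv_class}). Since $\pi^*\mu$ is mutually absolutely continuous with any genuine smooth volume form on $\hat X$ (they share the same null sets, as the exceptional locus is an analytic set of zero Lebesgue volume), and smooth volume forms are Bernstein-Markov with respect to psh weights by the mean-value inequality (\cite[Lemma 2.2]{BermanBouckBalls}), it follows that $\pi^*\mu$ is as well, and in particular it is Bernstein-Markov in the classical sense. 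Once this is supplied, the rest of your argument---the isometric isomorphism under normality, the identity $(\pi,\pi)_*\hat\mu_k^{\mathrm{Berg}}=\mu_k^{\mathrm{Berg}}$, and the pushforward of the weak limit using $(\pi\times\pi)\circ\hat\Delta=\Delta\circ\pi$---is correct and coincides with the paper's proof.
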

	Let us begin by establishing some preliminary results.
	For brevity, we denote 
	\begin{equation}
		\mu_k^B := \frac{1}{n_k} |B_k(x, x)| \cdot d \mu(x), \qquad \hat{\mu}_k^B := \frac{1}{\hat{n}_k} |\hat{B}_k(x, x)| \cdot d \hat{\mu}(x),
	\end{equation}
	where $\hat{n}_k := \dim H^0(\hat{X}, \pi^* L^{\otimes k})$ and $n_k := \dim H^0(X, L^{\otimes k})$.
	\begin{lem}\label{lem_comp_res_meas}
		As $k \to \infty$, the sequence of measures $\pi_* \hat{\mu}_k^B - \mu_k^B$ converges weakly to zero.
	\end{lem}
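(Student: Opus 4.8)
The plan is to push everything up to the resolution $\hat X$ through the pullback on sections, and there to compare the Bergman kernel of the full space $H^0(\hat X, \pi^* L^{\otimes k})$ with that of the subspace obtained by pullback from $X$. Since Lemma \ref{lem_comp_res_meas} only asserts that a \emph{difference} of measures goes to zero (not what the common limit is), no input from Theorem \ref{thm_diag} is needed; the argument is self-contained and does not use normality of $X$ (which enters only in Theorem \ref{thm_sing}).

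The first step is to observe that the pullback map $\pi^{*}\colon H^{0}(X, L^{\otimes k}) \to H^{0}(\hat X, \pi^{*} L^{\otimes k})$ is an isometric embedding for the $L^{2}$-products $\mathrm{Hilb}_{k}(h^{L}, \mu)$ and $\mathrm{Hilb}_{k}(\pi^{*} h^{L}, \pi^{*}\mu)$. Injectivity holds because $\pi$ restricts to a biholomorphism over a dense Zariski-open subset of $X$ and $X_{\mathrm{reg}}$ is connected; the isometry holds because $\pi$ is a biholomorphism over the complement of a proper analytic subset of $X$ (namely $X_{\mathrm{sing}}$, with preimage the exceptional locus $E \subset \hat X$), which is $\mu$-null, resp. $\pi^{*}\mu$-null, since $\mu$ is a volume form, together with $\pi_{*}(\pi^{*}\mu) = \mu$. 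Write $W_{k} := \pi^{*}\big(H^{0}(X, L^{\otimes k})\big) \subset H^{0}(\hat X, \pi^{*} L^{\otimes k})$, a subspace of dimension $n_{k}$, and let $\hat{B}_{k}^{W}$ be its reproducing kernel. Pulling back an orthonormal basis of $H^{0}(X, L^{\otimes k})$ produces an orthonormal basis of $W_{k}$, and comparing the two kernels term by term gives $|\hat{B}_{k}^{W}(\hat x, \hat x)|_{(\pi^{*} h^{L})^{k}} = |B_{k}(\pi(\hat x), \pi(\hat x))|_{(h^{L})^{k}}$ for $\hat x \notin E$; since $E$ and $\pi(E)$ are null, this yields $\pi_{*}\big( \tfrac{1}{n_{k}}|\hat{B}_{k}^{W}(\hat x, \hat x)| \, d\pi^{*}\mu \big) = \mu_{k}^{B}$.

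The second step compares $\hat{B}_{k}^{W}$ with $\hat{B}_{k}$. As $W_{k}$ is a linear subspace of $H^{0}(\hat X, \pi^{*} L^{\otimes k})$, the extremal characterization of the diagonal restriction of a reproducing kernel gives $|\hat{B}_{k}^{W}(\hat x, \hat x)| \leq |\hat{B}_{k}(\hat x, \hat x)|$ at every point of $\hat X$. Integrating each kernel against $\pi^{*}\mu$ (as in (\ref{eq_push_forw}), applied on $\hat X$) shows that $\nu_{k} := \tfrac{1}{n_{k}}|\hat{B}_{k}^{W}(\hat x, \hat x)|\, d\pi^{*}\mu$ and $\hat{\mu}_{k}^{B}$ are both probability measures on $\hat X$. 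The sole quantitative input is that $n_{k}/\hat{n}_{k} \to 1$: this follows from $n_{k} \leq \hat{n}_{k}$ together with the equality $\mathrm{vol}(L) = \mathrm{vol}(\pi^{*} L)$ of (\ref{eq_vol_resol}), which forces $\tfrac{n!}{k^{n}} n_{k}$ and $\tfrac{n!}{k^{n}}\hat{n}_{k}$ to share the same positive limit.

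The conclusion is then elementary bookkeeping with positive measures. The measure $\hat{\mu}_{k}^{B} - \tfrac{n_{k}}{\hat{n}_{k}}\nu_{k} = \tfrac{1}{\hat{n}_{k}}\big(|\hat{B}_{k}(\hat x,\hat x)| - |\hat{B}_{k}^{W}(\hat x,\hat x)|\big)\, d\pi^{*}\mu$ is nonnegative with total mass $1 - n_{k}/\hat{n}_{k} \to 0$, hence it tends to zero in total variation; meanwhile $\big(\tfrac{n_{k}}{\hat{n}_{k}} - 1\big)\nu_{k}$ tends to zero weakly since $\nu_{k}$ is a probability measure and $n_{k}/\hat{n}_{k} \to 1$. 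Adding the two gives $\hat{\mu}_{k}^{B} - \nu_{k} \to 0$ weakly on $\hat X$, and pushing forward by the continuous map $\pi$, which preserves weak convergence, yields $\pi_{*}\hat{\mu}_{k}^{B} - \mu_{k}^{B} \to 0$ weakly, as claimed. The only step requiring genuine care is the first one — checking that $\pi^{*}$ is honestly isometric and that the analytic subsets $X_{\mathrm{sing}}$, $E$, $\pi(E)$ can be discarded on both sides — but this is routine because $\mu$ is a volume form and these are proper analytic subsets.
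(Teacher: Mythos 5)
Your proposal is correct and takes essentially the same approach as the paper: both arguments hinge on the isometric embedding $\pi^{*}\colon H^{0}(X, L^{\otimes k}) \hookrightarrow H^{0}(\hat X, \pi^{*} L^{\otimes k})$, the pointwise positivity of the resulting Bergman-kernel difference (which you phrase via the extremal characterization of reproducing kernels and the paper phrases by completing to an orthonormal basis and writing the difference as $\sum|s_i|^2$), and the fact $n_k/\hat n_k \to 1$ from (\ref{eq_vol_resol}), which forces a sequence of nonnegative measures with vanishing total mass to converge weakly to zero. Your decomposition into $\hat\mu_k^B - \tfrac{n_k}{\hat n_k}\nu_k$ plus $(\tfrac{n_k}{\hat n_k}-1)\nu_k$ is a mild repackaging of the paper's reduction to the measure $\tfrac{1}{n_k}(|\hat B_k|-|B_k\circ\pi|)\,d\pi^{*}\mu$, and your remarks that the lemma needs neither Theorem \ref{thm_diag} nor normality of $X$ are both accurate.
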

	\begin{proof}
		According to (\ref{eq_vol_resol}), $n_k \sim \hat{n}_k$, as $k \to \infty$, and so it suffices to establish that the measures
		\begin{equation}\label{eq_meas_aux}
			\frac{1}{n_k} \Big( |\hat{B}_k(x, x)| - |B_k(\pi(x), \pi(x))| \Big) \cdot d \mu(x),
		\end{equation}
		converge weakly to zero, as $k \to \infty$.
		From the definitions, we have 
		\begin{equation}
			\int_{\hat{X}}  \Big( |\hat{B}_k(x, x)| - |B_k(\pi(x), \pi(x))| \Big) \cdot \pi^* d \mu(x)
			=
			\dim H^0(\hat{X}, \pi^* L^{\otimes k})
			-
			\dim H^0(X, L^{\otimes k}).
		\end{equation}
		Since by (\ref{eq_vol_resol}), the right-hand side of the above equality is asymptotically negligible compared to $n_k$, we deduce that the mass of the sequence of measures (\ref{eq_meas_aux}) tends to zero as $k \to \infty$.  
		However, the embedding of $H^0(X, L^{\otimes k})$ into $H^0(\hat{X}, \pi^* L^{\otimes k})$ is isometric with respect to the associated $L^2$-products, so the difference $|\hat{B}_k(x, x)| - |B_k(\pi(x), \pi(x))|$ is positive as it can be written as $\sum |s_i(x)|^2_{(h^L)^2}$ where $s_i$ form a basis of the orthogonal complement to the image of $H^0(X, L^{\otimes k})$ in $H^0(\hat{X}, \pi^* L^{\otimes k})$. 
		Hence the measures in (\ref{eq_meas_aux}) are positive, and as their mass tends to zero, the measures themselves tend weakly to zero as $k \to \infty$, which completes the proof.
	\end{proof}
	\begin{lem}\label{lem_bm}
		The measure $\pi^* \mu$ on $\hat{X}$ is Bernstein-Markov with respect to $(X, \pi^* h^L)$.
	\end{lem}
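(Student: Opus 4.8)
The plan is to reduce the Bernstein--Markov property of $\pi^* \mu$ on $(\hat{X}, \pi^* h^L)$ to that of $\mu$ on $(X, h^L)$, by exploiting the fact that pullback under $\pi$ identifies $H^0(X, L^{\otimes k})$ with $H^0(\hat{X}, \pi^* L^{\otimes k})$ when $X$ is normal, and more generally induces an isometric embedding. First I would recall that the $L^2$-product ${\textrm{Hilb}}_k(\pi^* h^L, \pi^* \mu)$ on $H^0(\hat{X}, \pi^* L^{\otimes k})$ pulls back, on the image of $H^0(X, L^{\otimes k})$, to ${\textrm{Hilb}}_k(h^L, \mu)$; this is essentially the isometry statement already used in the proof of Lemma \ref{lem_comp_res_meas}, following from the change-of-variables formula and the fact that $\pi$ is a biholomorphism off a measure-zero set. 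For the $L^\infty$-norms, since $\pi$ is surjective and $|\pi^* s|_{(\pi^* h^L)^k}(x) = |s|_{(h^L)^k}(\pi(x))$, we have ${\textrm{Ban}}_k^{\infty}(X, \pi^* h^L)$ on $\pi^* s$ equal to ${\textrm{Ban}}_k^{\infty}(X, h^L)$ on $s$, at least over the relevant compact set.

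The key subtlety is that the Bernstein--Markov inequality \eqref{eq_bm_clas} as stated on $X$ is an inequality on the \emph{whole} space $H^0(X, L^{\otimes k})$, whereas on $\hat{X}$ the sections of $\pi^* L^{\otimes k}$ that do not descend to $X$ form (in the non-normal case) a nonzero orthogonal complement. To handle this I would argue as follows: the pullback map $\pi^* : H^0(X, L^{\otimes k}) \to H^0(\hat{X}, \pi^* L^{\otimes k})$ is always injective with isometric image, and by \eqref{eq_vol_resol} the codimension $\hat{n}_k - n_k$ of this image is $o(k^n)$, hence negligible against $n_k \sim \hat{n}_k$. But for the Bernstein--Markov property we actually only need to compare $L^\infty$ and $L^2$ norms, and a cheap way around the complement is to observe that every $\hat{s} \in H^0(\hat{X}, \pi^* L^{\otimes k})$ is automatically of the form $\pi^* s$ for $s \in H^0(X, L^{\otimes k})$ when $X$ is normal (by Zariski's main theorem / the fact that $\pi_* \mathcal{O}_{\hat X} = \mathcal{O}_X$), and in the general reduced case one works with the reflexive hull or simply notes that the statement is only \emph{used} in the normal setting of Theorem \ref{thm_sing}; alternatively, since $\Sigma := \pi(\mathrm{Exc}(\pi))$ and the non-normal locus are analytic of positive codimension hence $\mu$-null and $\pi^*\mu$-null, sections of $\pi^* L^{\otimes k}$ restrict to sections over the complement which, being bounded there, extend, so the two spaces agree. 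I would then simply transport the inequality: given $\epsilon > 0$, pick $C$ from the Bernstein--Markov property of $\mu$, and for $\hat{s} = \pi^* s$ estimate
\begin{equation*}
	\| \hat{s} \|_{L^\infty(X, (\pi^* h^L)^k)}^2
	= \| s \|_{L^\infty(X, (h^L)^k)}^2
	\leq C \exp(\epsilon k) \| s \|_{{\textrm{Hilb}}_k(h^L, \mu)}^2
	= C \exp(\epsilon k) \| \hat{s} \|_{{\textrm{Hilb}}_k(\pi^* h^L, \pi^* \mu)}^2,
\end{equation*}
which is precisely \eqref{eq_bm_clas} for $\pi^*\mu$ on $(\hat X, \pi^* h^L)$.

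The main obstacle I anticipate is making the identification $H^0(\hat X, \pi^* L^{\otimes k}) \cong H^0(X, L^{\otimes k})$ (or at least the norm comparison) rigorous in a way that is self-contained and does not secretly assume normality of $X$; the clean resolution is to invoke that the exceptional set and the singular locus are proper analytic subsets, hence pluripolar and therefore $\mu$- and $\pi^*\mu$-null by the non-pluripolarity hypothesis, and that a bounded holomorphic section over the complement of such a set extends across it by the Riemann extension theorem on the smooth manifold $\hat X$ — so every element of $H^0(\hat X, \pi^* L^{\otimes k})$ does descend, giving the bijection. Everything else is a routine change-of-variables bookkeeping, and no new analytic input beyond what is already invoked in Lemma \ref{lem_comp_res_meas} is needed.
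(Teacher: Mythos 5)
Your argument is circular: the entire point of Lemma~\ref{lem_bm} is that no Bernstein--Markov hypothesis is available on the singular space $X$ in Section~\ref{sect_sing}. There, $\mu$ is just a fixed volume form on a (possibly singular) reduced analytic space, and the inequality~\eqref{eq_bm_clas}, as well as Theorem~\ref{thm_diag} which consumes it, is formulated for compact complex \emph{manifolds}. When you write ``pick $C$ from the Bernstein--Markov property of $\mu$'' you are invoking a property that is neither assumed nor established for $(X,h^L)$; transporting it through the isometry $\pi^*$ therefore proves nothing. The paper goes the other way: it works directly on the smooth manifold $\hat X$, notes that by the mean-value inequality any positive volume form on $\hat X$ is Bernstein--Markov with respect to psh weights for any line bundle with continuous metric (cf.\ \cite[Lemma 2.2]{BermanBouckBalls}), observes that $\pi^*\mu$ lies in the same absolute-continuity class as such a volume form (the exceptional locus being Lebesgue-null on both sides), and then concludes via Theorem~\ref{thm_bbwn_determining} and Remark~\ref{rem_equiv_class} that the Bernstein--Markov property with respect to psh weights -- and a fortiori~\eqref{eq_bm_clas} -- depends only on the absolute-continuity class. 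None of this requires comparing $H^0(X,L^{\otimes k})$ with $H^0(\hat X,\pi^*L^{\otimes k})$.

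A second, independent flaw: your proposed fix for the non-normal case does not work. You claim that any $\hat s\in H^0(\hat X,\pi^*L^{\otimes k})$ descends to $X$ because a bounded holomorphic section over the complement of the exceptional/singular locus extends by Riemann extension. Riemann extension on the smooth manifold $\hat X$ only recovers $\hat s$ itself; the obstruction is on the $X$ side, where a bounded holomorphic function on $X\setminus X_{\mathrm{sing}}$ need \emph{not} extend across $X_{\mathrm{sing}}$ unless $X$ is normal (indeed $\pi_*\mathcal{O}_{\hat X}$ is the integral closure of $\mathcal{O}_X$, and the inclusion is strict exactly in the non-normal case, e.g.\ a nodal curve). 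The paper's absolute-continuity argument sidesteps this entirely, which is precisely why Lemma~\ref{lem_bm} is stated without a normality hypothesis while Theorem~\ref{thm_sing} requires it.
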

	In order to establish Lemma \ref{lem_bm}, we need to recall some notions from pluripotential theory.
	We thus fix a complex manifold $Z$ with a big line bundle $F$ endowed with a continuous metric $h^F$.
	We fix a non-pluripolar measure $\nu$ with support $Z$ (all the definitions and results below work for measures supported on arbitrary compact subsets, but since we shall only work with fully supported measures, we simplify our presentation accordingly).
	Following \cite{SiciakDetermMeas} and \cite{BerBoucNys}, recall that $\nu$ is called \textit{Bernstein-Markov with respect to psh weights on $(Z, h^F)$} if for any $\epsilon > 0$, there is $C > 0$, such that for any $\psi: X \to [- \infty, +\infty[$, so that $h^F \cdot \exp(- \psi)$ has a psh potential, for any $p \geq 1$, we have
	\begin{equation}\label{bm_psh_weights}
		\sup_Z \big( \exp(p \cdot \psi) \big)
		\leq
		C
		\cdot
		\exp(\epsilon p)
		\cdot
		\int_Z \exp(p \cdot \psi) d \nu.
	\end{equation}
	\par 
	It is immediate that $\nu$ is Bernstein-Markov for $(Z, h^F)$ if it is Bernstein-Markov with respect to psh weights.
	Indeed, by substituting $\psi := \frac{1}{k} \log |s|_{(h^F)^k}$ and $p = 2k$ into (\ref{bm_psh_weights}), one obtains the required bound for (\ref{eq_bm_clas}).
	\par 
	Following \cite{BermanBouckBalls}, we say that $\nu$ is \textit{determining} for $(Z, h^F)$ if for each measurable subset $E \subset Z$, $\nu(E) = 0$, we have $h^F_{Z} = h^F_{Z \setminus E}$.
	This definition differs slightly from \cite{SiciakDetermMeas}. 
	For a comparison, we refer to \cite[Proposition 3.6]{FinEigToepl}.
	\par 
	Let us recall the following result from \cite[Theorem 1.14]{BerBoucNys} (see also \cite[Theorems 5.1 and 5.2]{SiciakDetermMeas}).
	\begin{thm}\label{thm_bbwn_determining}
		The following statements are equivalent: 
		\begin{enumerate}[a)]
			\item The measure $\nu$ is determining for $(Z, h^F)$.
			\item The measure $\nu$ is Bernstein-Markov with respect to psh weights for $(Z, h^F)$.
		\end{enumerate}
	\end{thm}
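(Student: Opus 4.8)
The plan is to show that each of conditions (a) and (b) is equivalent to the single pointwise statement
\begin{equation}\label{eq_star_ptwise}
	\sup_Z \psi = \esssup_\nu \psi \quad \text{ for every } \psi \text{ with } h^F \exp(-\psi) \text{ admitting a psh potential.}
\end{equation}
Call such a $\psi$ \emph{admissible}; equivalently, $\psi$ is $\theta_0$-psh for $\theta_0 := c_1(F, h^F)$. Admissibility is preserved under adding a real constant; an admissible $\psi$ (not identically $-\infty$) is upper semi-continuous, so $\sup_Z \psi$ is finite and attained. Throughout, one repeatedly uses that $\nu$, being non-pluripolar, ignores the pluripolar ``negligible sets'' created by upper semi-continuous regularization.

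I would first dispatch the elementary equivalences. For (b) $\Rightarrow$ (\ref{eq_star_ptwise}): normalizing an admissible $\psi$ so that $\sup_Z\psi = 0$, one has $\sup_Z\exp(p\psi) = 1$, so (\ref{bm_psh_weights}) reads $1 \leq C\exp(\epsilon p)\int_Z\exp(p\psi)\, d\nu$; applying $\tfrac1p\log(\cdot)$, letting $p \to \infty$ (using $\tfrac1p\log\int_Z\exp(p\psi)\,d\nu \to \esssup_\nu\psi$), then $\epsilon \downarrow 0$, gives $\esssup_\nu\psi \geq 0$, the reverse inequality being trivial. For the equivalence of (\ref{eq_star_ptwise}) with (a): given $\nu(E) = 0$, the bound $h^F_Z \geq h^F_{Z\setminus E}$ is immediate, since fewer constraints enter the infimum defining $h^F_{Z\setminus E}$; conversely, any competitor $h_0 \geq h^F$ on $Z\setminus E$ with psh potential may be written $h_0 = h^F\exp(-\psi)$ with $\psi$ admissible, and then $\psi \leq 0$ on $Z\setminus E$, hence $\nu$-a.e., hence $\esssup_\nu\psi \leq 0$, hence $\sup_Z\psi \leq 0$ by (\ref{eq_star_ptwise}), i.e.\ $h_0 \geq h^F$ on all of $Z$ and so $h_0 \geq h^F_Z$; taking the infimum over $h_0$ gives $h^F_{Z\setminus E} \geq h^F_Z$, whence $h^F_Z = h^F_{Z\setminus E}$. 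In the other direction, if some admissible $\psi$ violates (\ref{eq_star_ptwise}), translate so that $\esssup_\nu\psi = 0 < \sup_Z\psi$ and set $E := \{\psi > 0\}$, so $\nu(E) = 0$; then $h^F\exp(-\psi) \geq h^F$ on $Z\setminus E$ forces $h^F_{Z\setminus E} \leq h^F\exp(-\psi)$, while at a point $x_0 \in E$ realizing the positive supremum of $\psi$ one has $h^F\exp(-\psi)(x_0) < h^F(x_0) \leq h^F_Z(x_0)$, the last step because every competitor for $h^F_Z$ dominates $h^F$ on $Z$; thus $h^F_{Z\setminus E}(x_0) < h^F_Z(x_0)$, so $\nu$ is not determining.

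The substantial step is (\ref{eq_star_ptwise}) $\Rightarrow$ (b), i.e.\ passing from the pointwise identity, valid for each admissible $\psi$ separately, to the uniform estimate (\ref{bm_psh_weights}). Put $m(t) := \inf\{\nu(\{\psi \geq -t\}) : \psi \text{ admissible},\ \sup_Z\psi = 0\}$ for $t > 0$. If $m(t) > 0$ for all $t > 0$, then (b) follows: for $\epsilon > 0$ and $\psi$ admissible with $\sup_Z\psi = 0$ one has $\sup_Z\exp(p\psi) = 1$ and $\int_Z\exp(p\psi)\,d\nu \geq \exp(-\epsilon p)\,\nu(\{\psi \geq -\epsilon\}) \geq \exp(-\epsilon p)\,m(\epsilon)$, yielding (\ref{bm_psh_weights}) with $C = 1/m(\epsilon)$ for all $p \geq 1$, the unnormalized case then following by $\psi \mapsto \psi - \sup_Z\psi$. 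Suppose instead $m(t_0) = 0$ for some $t_0 > 0$, and pick admissible $\psi_j$ with $\sup_Z\psi_j = 0$ and $\nu(\{\psi_j \geq -t_0\}) \to 0$; after passing to a subsequence with $\sum_j \nu(\{\psi_j \geq -t_0\}) < \infty$, Borel--Cantelli gives $\limsup_j\psi_j \leq -t_0$ outside a $\nu$-null set. Choosing $x_j$ with $\psi_j(x_j) = 0$ and, after a further extraction, $x_j \to x_* \in Z$, set $\Phi := (\limsup_j\psi_j)^*$. By the Brelot--Cartan lemma for $\theta_0$-psh functions (all $\psi_j \leq 0$, hence locally uniformly bounded above), $\Phi = \lim_N (\sup_{j \geq N}\psi_j)^*$; for each $N$ the function $(\sup_{j\geq N}\psi_j)^*$ is $\geq \psi_m(x_m) = 0$ at every $x_m$ with $m \geq N$ and is $\leq 0$ everywhere, so it equals $0$ there, and being upper semi-continuous it equals $0$ at $x_*$; hence $\Phi(x_*) = 0$, in particular $\Phi \not\equiv -\infty$, so $\Phi$ is admissible with $\sup_Z\Phi = 0$. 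But $\Phi = \limsup_j\psi_j \leq -t_0$ $\nu$-a.e.\ (the set $\{\Phi \neq \limsup_j\psi_j\}$ being pluripolar), so $\esssup_\nu\Phi \leq -t_0 < 0 = \sup_Z\Phi$, contradicting (\ref{eq_star_ptwise}). Hence $m(t) > 0$ for all $t > 0$ and (b) holds. Chaining (a) $\Leftrightarrow$ (\ref{eq_star_ptwise}) $\Leftrightarrow$ (b) completes the proof.

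I expect the main obstacle to be exactly this last implication: the pointwise form of (\ref{eq_star_ptwise}) is easy, but upgrading it to the uniform Bernstein--Markov inequality requires the compactness/extraction argument above, resting on the Brelot--Cartan lemma together with Borel--Cantelli. The remaining implications are routine pluripotential bookkeeping, the only care needed being to track the pluripolar negligible sets, which are harmless precisely because $\nu$ does not charge pluripolar sets.
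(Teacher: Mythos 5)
Your proof is correct, but note that the paper itself contains no proof of this statement: it is quoted from \cite[Theorem 1.14]{BerBoucNys} (see also \cite{SiciakDetermMeas}), so there is nothing internal to compare against. Your argument is essentially a self-contained reconstruction of the standard one: both there and here the pivot is the equivalence of each condition with the pointwise identity $\sup_Z \psi = \esssup_\nu \psi$ for all $\theta_0$-psh weights $\psi$, and the only substantive step is upgrading this to the uniform inequality (\ref{bm_psh_weights}). Where the literature proof typically extracts a limiting weight via $L^1$-compactness of sup-normalized quasi-psh functions together with Hartogs' lemma, you instead run a Borel--Cantelli extraction and regularize $\limsup_j \psi_j$; this is a pleasant, somewhat more elementary variant, and the peak points $x_j$ with $\psi_j(x_j)=0$ converging to $x_*$ correctly guarantee that the regularized limit $\Phi$ is not identically $-\infty$ and has $\sup_Z \Phi = 0$, while Brelot--Cartan plus non-pluripolarity of $\nu$ gives $\esssup_\nu \Phi \le -t_0$, producing the desired contradiction. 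Two small points worth smoothing: the identity $(\limsup_j \psi_j)^* = \lim_N (\sup_{j \ge N} \psi_j)^*$ deserves a one-line justification (both sides are $\theta_0$-psh and agree off a pluripolar, hence Lebesgue-null, set), or can be avoided entirely by taking $\lim_N (\sup_{j\ge N}\psi_j)^*$ as the definition of $\Phi$; and in the elementary implications you should record that $\esssup_\nu\psi > -\infty$ (hence the normalizations are legitimate) because $\{\psi=-\infty\}$ is pluripolar and $\nu$ does not charge pluripolar sets -- which you do implicitly. With these remarks, the chain (a) $\Leftrightarrow$ ($\sup=\esssup$) $\Leftrightarrow$ (b) is complete.
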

	\begin{rem}\label{rem_equiv_class}
		Note that the definition of a determining measure depends only on the absolute continuity class of the measure. By Theorem \ref{thm_bbwn_determining}, the same holds true for the Bernstein-Markov property with respect to psh weights.
	\end{rem}
	\begin{proof}[Proof of Lemma \ref{lem_bm}]
		We will in fact establish a stronger statement by showing that the measure $\pi^* \mu$ on $\hat{X}$ is Bernstein-Markov with respect to psh weights on $(\hat{X}, \pi^* h^L)$.  
		First, observe that by the mean-value inequality, any positive volume form on $\hat{X}$ is Bernstein-Markov with respect to psh weights for any line bundle endowed with a continuous metric.  
		For a proof, the reader may consult \cite[Lemma 2.2]{BermanBouckBalls}, which treats the classical Bernstein-Markov property; the same argument applies verbatim in the plurisubharmonic setting.  
		Note also that the measure $\pi^* \mu$ lies in the same absolute continuity class as any positive volume form on $\hat{X}$.  
		We conclude by Remark \ref{rem_equiv_class}.
	\end{proof}
	\begin{proof}[Proof of Theorem \ref{thm_sing0}]
		By Theorem \ref{thm_diag} and Lemma \ref{lem_bm}, as $k \to \infty$, the sequence of measures $\hat{\mu}_k^B$ converges to $\mu_{\mathrm{eq}}(\hat{X}, \pi^* h^L)$.
		We conclude by this, Lemma \ref{lem_comp_res_meas}, (\ref{eq_defn_eq_sing}) and the fact that pushforwards under continuous maps preserve weak convergence.
	\end{proof}
	\begin{proof}[Proof of Theorem \ref{thm_sing}]
		Observe that Theorem \ref{thm_distr} is a formal consequence of Theorem \ref{thm_alg}, which itself follows from Theorem \ref{thm_off_diag}. 
		Thus, it is enough to prove the analogue of the latter result, on which we concentrate from now on.
		\par 
		Due to the normality of our space, the embedding of $H^0(X, L^{\otimes k})$ into $H^0(\hat{X}, \pi^* L^{\otimes k})$ is an isomorphism by Zariski's main theorem. 
		Since it is also an isometry with respect to the associated $L^2$-products, we deduce that $(\pi, \pi)_* \hat{\mu}_k^{\rm{Berg}} = \mu_k^{\rm{Berg}}$.
		However, by Theorem \ref{thm_off_diag} and Lemma \ref{lem_bm}, $\hat{\mu}_k^{\rm{Berg}}$ converge weakly, as $k \to \infty$, towards $\hat{\Delta}_* \mu_{\mathrm{eq}}(\hat{X}, \pi^* h^L)$, where $\hat{\Delta} : \hat{X} \to \hat{X} \times \hat{X}$ is the diagonal embedding.
		We conclude by this, (\ref{eq_defn_eq_sing}) and the fact that pushforwards under continuous maps preserve weak convergence.
	\end{proof}

\bibliography{bibliography}

		\bibliographystyle{abbrv}

\Addresses

\end{document}